\theoremstyle{plain}
\newtheorem{thm}{Theorem}[section]
\newtheorem{lemma}[thm]{Lemma}
\newtheorem{prop}[thm]{Proposition}
\newtheorem{cor}[thm]{Corollary}
\newtheorem{THM}{Theorem}
\theoremstyle{remark}
\newtheorem{example}[thm]{Example}
\newtheorem{remark}[thm]{Remark}
\newcommand{\mb}{\mathbb}
\newcommand{\mc}{\mathcal}
\newcommand{\C}{\mb C}
\newcommand{\F}{\mc F}
\newcommand{\G}{\mc G}
\DeclareMathOperator{\id}{id}
\DeclareMathOperator{\codim}{codim}
\DeclareMathOperator{\Aut}{Aut}
\DeclareMathOperator{\Mor}{Mor}
\DeclareMathOperator{\ev}{ev}
\DeclareMathOperator{\Aff}{Aff}
\newcommand{\KF}{{K_\F}}
\newcommand{\TF}{{T_\F}}
\newcommand{\Ftang}{\F_{\rm tang}}
\numberwithin{equation}{section}
\numberwithin{equation}{section}       
\title{Deformation of rational curves along foliations}
\author{Frank Loray}
\address{ Univ Rennes, CNRS, IRMAR - UMR 6625, F-35000 Rennes, France\hfill}
\email{frank.loray@univ-rennes1.fr}
\author{Jorge Vit\'orio Pereira}
\address{IMPA, Estrada Dona Castorina, 110, Horto, Rio de Janeiro,
Brasil}
\email{jvp@impa.br}
\author{Fr\'ed\'eric Touzet}
\address{ Univ Rennes, CNRS, IRMAR - UMR 6625, F-35000 Rennes, France\hfill}
\email{frederic.touzet@univ-rennes1.fr}
\subjclass{37F75} \keywords{Foliation, Transverse Structure}
\begin{document}

\begin{abstract}
Deformation of morphisms along leaves of foliations define the
tangential foliation on the corresponding space of morphisms.
We prove that codimension one foliations having a tangential foliation with at least one non-algebraic leaf are transversely homogeneous with structure group determined by the codimension of the non-algebraic leaf in its Zariski closure. As an application, we provide a structure theorem for degree three foliations on $\mathbb P^3$.
\end{abstract}

\maketitle

\setcounter{tocdepth}{1}


\section{Introduction}

\subsection{Motivation}
Singular holomorphic codimension one foliations on projective spaces of dimension at least three have been widely studied in recent years. Much of the recent activity on the subject was spurred by the classification of irreducible components of the space of foliations of degree two by Cerveau and Lins Neto \cite{MR1394970}.
Despite of the growing literature on the subject, not much of it is devoted to the next simplest case: irreducible components of the space of foliations of degree three on $\mathbb P^3$. A notable exception is \cite{MR3088436} where it is proven that they  are either transversely affine foliations, or are rational pull-backs of foliations on surfaces. In this paper we refine  \cite{MR3088436} by means of the following result.

\begin{THM}\label{THM:A}
If  $\F$ is a codimension one singular holomorphic foliation on $\mathbb P^3$ of degree three then
\begin{enumerate}
\item $\F$ is defined by a closed rational $1$-form without codimension one zeros; or
\item there exists a degree one foliation by algebraic curves  tangent to $\F$; or
\item $\F$ is a linear pull-back of a degree $3$ foliation on $\mathbb P^2$; or
\item $\F$ admits a rational first integral.
\end{enumerate}
\end{THM}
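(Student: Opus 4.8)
The plan is to feed the dichotomy of Cerveau--Lins Neto into the main theorem of this paper --- a codimension one foliation whose tangential foliation has a non-algebraic leaf is transversely homogeneous, with structure group determined by the codimension of that leaf in its Zariski closure. First I would invoke \cite{MR3088436}: a degree three foliation $\F$ on $\mathbb P^3$ is either transversely affine, or a rational pull-back $\F=\varphi^{*}\G$ of a foliation $\G$ on a projective surface $S$, and in the latter case one may arrange that $\varphi$ has connected fibres and that $\G$ is not itself pulled back from a curve. In the pull-back situation a general fibre of $\varphi$ is a rational curve $C$ tangent to $\F$, and the leaf of $\F$ through $C$ is $\varphi^{-1}(L)$, where $L$ is the leaf of $\G$ through $\varphi(C)$.

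I would treat the pull-back case first. If $\G$ carries a rational first integral, so does $\F$, and we are in case (4). Otherwise a general leaf $L$ of $\G$, hence $\varphi^{-1}(L)$, is non-algebraic, so the leaf of the tangential foliation through $[C]$ in the relevant space of morphisms is non-algebraic; by the main theorem $\F$ is transversely homogeneous, with structure group $G\subseteq\Aut(\mathbb P^1)$. Now split according to $G$. If $G$ is the additive or the multiplicative group, $\F$ is defined by a closed rational $1$-form; studying closed rational $1$-forms on $\mathbb P^3$ inducing a degree three foliation --- using that such a form is unique up to a constant when there is no rational first integral, and that its codimension one zeros are supported on $\F$-invariant hypersurfaces --- I would conclude that $\F$ is in case (1), unless the degree constraint forces a rational first integral (case (4)) or a rational fibration with $\F$-tangent fibres (cases (2)--(3)). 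If $G$ is the affine group or all of $\Aut(\mathbb P^1)$, the transverse structure descends to $\G$ on the rational surface $S$; since $\G$ is not pulled back from a curve, the classification of genuinely transversely affine and transversely projective foliations on rational surfaces forces $\G$ to be Riccati with respect to a rational fibration of $S$, which pins down $S$, $\G$ and the numerical invariants of $\varphi$. Inserting this into the degree formula for pull-back foliations, where $\deg\F=3$ admits only finitely many solutions, I would conclude that the fibres of $\varphi$ form a foliation by algebraic curves tangent to $\F$ of degree one --- case (2) --- or, when that foliation is radial and $\varphi$ a linear projection, of degree zero --- case (3).

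There remains the case of a transversely affine $\F$ that is not a rational pull-back. Here I would invoke the structure theory of transversely affine foliations on projective manifolds, applied to the closed rational $1$-form $\eta$ with $d\omega=\eta\wedge\omega$: if the associated linear monodromy is trivial then $\F$ is defined by a closed rational $1$-form and the previous paragraph applies; if it is nontrivial then the affine structure factors through a curve or a surface, and specializing to degree three one finds that $\F$ is either a rational pull-back after all --- handled above --- or carries a rational first integral. Hence every branch terminates in one of (1)--(4).

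The step I expect to be the main obstacle is precisely the case analysis once $\F$ is known to be transversely homogeneous: translating the abstract transverse structure into the concrete alternatives (1)--(3), and in particular controlling the degrees of $\varphi$ and of $\G$ well enough to guarantee that the rational curves tangent to $\F$ coming from the pull-back --- or from a Riccati fibration --- organize into a foliation by curves of degree at most one rather than something of higher degree. That numerology, forced by $\deg\F=3$, together with the verification that in case (1) the closed rational $1$-form may be chosen without codimension one zeros, is where the real work lies.
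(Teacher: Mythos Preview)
Your approach differs substantially from the paper's, and the difference is instructive. You start from the dichotomy of \cite{MR3088436} (transversely affine versus rational pull-back from a surface) and then, in the pull-back case, use the fibres of $\varphi$ as the rational curves on which to run the tangential-foliation machinery. This forces you into the case analysis on the transverse structure group and the degree numerology for $\varphi$ and $\G$ that you correctly flag as the main obstacle --- and which you do not actually carry out.

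The paper sidesteps all of this by choosing the family of rational curves differently: it uses \emph{lines} in $\mathbb P^3$. The point is twofold. First, $\deg\F=3$ means $K_\F=\mathcal O_{\mathbb P^3}(1)$, so for a general line $f:\mathbb P^1\to\mathbb P^3$ one has $h^0(\mathbb P^1,f^*T\F)\neq0$ and $\Ftang$ has positive dimension. Second, and crucially, a general line meets every hypersurface of $\mathbb P^3$ non-trivially and transversely, so the hypotheses of Theorem~\ref{THM:bootstrapping} --- not merely Theorem~\ref{THM:iff} --- are satisfied. Hence if the general leaf of $\Ftang$ is non-algebraic one lands directly in case~(1), with the absence of codimension-one zeros already built into the conclusion of Theorem~\ref{THM:bootstrapping}; no further case analysis on the structure group is needed. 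If the general leaf of $\Ftang$ is algebraic, Proposition~\ref{P:algleaves} produces a subfoliation $\mathcal A\subset\F$ by algebraic leaves with $h^0(\mathbb P^1,f^*T_{\mathcal A})=h^0(\mathbb P^1,f^*T\F)\neq0$; if $\dim\mathcal A=2$ this is case~(4) by Darboux--Jouanolou, and if $\dim\mathcal A=1$ then $T_{\mathcal A}$ is a line bundle on $\mathbb P^3$ with a section on every line, forcing $T_{\mathcal A}\in\{\mathcal O_{\mathbb P^3},\mathcal O_{\mathbb P^3}(1)\}$, i.e.\ $\mathcal A$ has degree one or zero --- cases~(2) and~(3).

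So the paper does not use \cite{MR3088436} in the proof at all; that reference appears only as motivation. Your route might be salvageable, but the numerology you describe as ``where the real work lies'' is genuinely missing from your proposal: you would need to show that the fibres of the rational map $\varphi$ supplied by \cite{MR3088436} are free morphisms, and then to control their degree well enough to force a degree~$\le1$ foliation by curves. Neither step is automatic. The paper's choice of lines makes both the transversality hypothesis of Theorem~\ref{THM:bootstrapping} and the final degree count immediate, which is why its proof of Theorem~\ref{THM:A} occupies half a page.
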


The foliations defined by closed rational $1$-forms without codimension one zeros lie at the so called logarithmic components of the space of foliations, see \cite[Section 2.5]{MR3066408} or \cite[Lemma 8]{MR1394970}. It is also known the existence of irreducible components, for the space of codimension one foliations of every degree on projective spaces of any dimension $n\ge 3$, with general element equal to a linear pull-back of a foliation on $\mathbb P^2$. Theorem \ref{THM:A} reduces the problem of determining/classifying the irreducible
components of the space of degree three foliations on $\mathbb P^3$ to the problem of determining/classifying irreducible components with general element fitting into the descriptions given by items (2) and (4) of Theorem \ref{THM:A}.

Theorem \ref{THM:A} is proved through the study of deformations of morphisms from $\mathbb P^1$ to $\mathbb P^3$ along  leaves of the codimension one foliation.

\subsection{Tangential foliations}  As it was observed in   \cite{2011arXiv1107.1538L}, foliations on a given uniruled manifold $X$ naturally induce foliations on the space of morphisms $\Mor(\mathbb P^1,X)$, the so called tangential foliations. A leaf $L$ of a tangential foliation correspond to a maximal analytic family of morphisms which do not move the points outside the starting leaves. In other words, if $L$ is a leaf of a tangential foliation and $L \times \mathbb P^1 \to X$ is the restriction of the evaluation morphism then the pull-back of the foliation on $X$ to $L \times \mathbb P^1$ is the foliation given by the fibers of the projection $L \times \mathbb P^1 \to \mathbb P^1$,  or the foliation with just one leaf. A more precise definition of  tangential foliation  can be found in  Section \ref{S:tangential}.

In this paper, we refine  our previous result on the subject \cite[Theorem 6.5]{2011arXiv1107.1538L}.

\begin{THM}\label{THM:iff}
Let $X$ be a uniruled projective manifold
and  let $\mathcal F$ be a codimension one foliation on $X$. Fix an irreducible component $M$ of
the space of morphisms from $\mathbb P^1$ to $X$ containing a free morphism and let $\Ftang$ be
the tangential foliation of $\mathcal F$ defined on this irreducible component $M \subset \Mor(\mathbb P^1,X)$.
Let $\delta$ be the codimension of a general leaf $L$ of $\Ftang$ inside its Zariski closure $\overline L$,
i.e.  $\delta = \dim \overline{L} - \dim L$. Then $\delta \le 3$. Furthermore, if $L$ is not algebraic (i.e. $\delta>0$),
    then the following assertions hold true.
	\begin{enumerate}
		\item $\delta=3$ if and only if $\F$ is transversely projective but not transversely affine; and
		\item $\delta=2$ if and only if $\F$ is transversely affine but not virtually transversely Euclidean; and
		\item $\delta=1$ if and only if $\F$ is virtually transversely Euclidean.
	\end{enumerate}
\end{THM}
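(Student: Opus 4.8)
The plan is to translate the geometry of the tangential foliation $\Ftang$ on $M$ into transverse structures on $\F$ via the evaluation map, and to read off the structure group from the codimension $\delta$. The starting point is the dictionary already set up in \cite[Theorem 6.5]{2011arXiv1107.1538L}: a non-algebraic leaf $L$ of $\Ftang$ forces $\F$ to be transversely projective, so $\delta>0$ implies $\F$ admits a projective transverse structure, hence $\delta\le 3$ since the Zariski closure $\overline L$ is cut out (locally, off the discriminant) by the vanishing of the three coordinates of the developing map composed with the family, i.e.\ by at most three functionally independent "transverse coordinates'' along the family of morphisms. So the content to be proven is the sharper correspondence in items (1)--(3): the drop in codimension of $L$ inside $\overline L$ is governed by which subgroup of $\mathrm{PSL}(2,\C)$ the monodromy/holonomy of the transverse structure reduces to.

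First I would fix a free morphism $f\in L$ and a projective transverse structure for $\F$, given by a (multivalued) submersion, the developing map, on a neighborhood of the generic leaf; pulling back along $\ev\colon M\times\mathbb P^1\to X$ and restricting to $\{f\}\times\mathbb P^1$ (where $f$ is constant on the leaf through $f$), the developing map becomes a rational function on $\mathbb P^1$, and moving $f$ inside $M$ gives a map $\Phi$ from a neighborhood of $f$ in $M$ to a space of configurations of such rational functions modulo the transverse action. The key computation is that the foliation $\Ftang$ near $f$ is the pullback by $\Phi$ of the "point foliation'' on the target — i.e.\ $\overline L$ near $f$ is a fiber of $\Phi$ — and that the fibers of $\Phi$ have codimension exactly equal to the dimension of the orbit of the structure group $G\subseteq \mathrm{PSL}(2,\C)$ acting on the relevant transverse coordinate. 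Concretely: if $G$ is all of $\mathrm{PSL}(2,\C)$ (projective, not affine) the orbit is $3$-dimensional giving $\delta=3$; if $G$ lies in the affine group $\mathrm{Aff}(\C)$ but not in a Euclidean (translation, up to finite index) subgroup, the orbit is $2$-dimensional giving $\delta=2$; and if $G$ is virtually Euclidean the orbit is $1$-dimensional giving $\delta=1$. The "virtually'' in item (3) is exactly what is needed because passing to the Zariski closure only sees the structure up to a finite cover, so one should work throughout with the Zariski closure of the monodromy group and its identity component.

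For the reverse implications I would argue by ruling out the larger values of $\delta$: if $\F$ is transversely affine then the developing map can be taken with values in $\C$ with affine monodromy, so the image of $\Phi$ lands in a lower-dimensional configuration space and $\delta\le 2$; combined with the direct implication (a genuinely affine, non-Euclidean structure gives $\delta\ge 2$, a non-affine projective structure gives $\delta\ge 3$) this pins down $\delta$ exactly, and likewise virtually Euclidean forces $\delta\le 1$ while non-algebraicity forces $\delta\ge 1$. The main technical obstacle — and the step I would spend the most care on — is establishing that $\Ftang$ is \emph{exactly} the pullback of the point foliation under $\Phi$, with no extra drop or rise in dimension: this requires (a) showing that the family-of-morphisms deformations tangent to $\F$ are precisely those along which the configuration $\Phi(f)$ is constant modulo $G$, which is a transversality/genericity statement using that $f$ is free (so $H^1(\mathbb P^1, f^*T_X)=0$ and deformations are unobstructed and surject onto the expected normal data), and (b) controlling the singular locus of the transverse structure so that the generic leaf $L$ and generic $f$ avoid it, so that the local picture globalizes to the Zariski closure. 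I would also need the standard fact that a Zariski-closed subgroup of $\mathrm{PSL}(2,\C)$ is, up to finite index, either all of $\mathrm{PSL}(2,\C)$, or contained in $\mathrm{Aff}(\C)$, or contained in the group generated by $z\mapsto z+1$ and $z\mapsto -z$ (the infinite dihedral/Euclidean case), or finite — the finite case being excluded by non-algebraicity of $L$ — which is what makes the trichotomy in items (1)--(3) exhaustive.
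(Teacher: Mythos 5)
There is a genuine gap, and it sits exactly where the paper's new content lies. Your opening move is to declare that a non-algebraic leaf of $\Ftang$ already forces $\F$ to be transversely projective, citing the earlier result \cite[Theorem 6.5]{2011arXiv1107.1538L}, and everything afterwards (the developing map, the configuration map $\Phi$, the orbit-dimension count) presupposes this global transverse structure on $X$. But that earlier theorem requires the general leaf of the direct-image foliation to be \emph{Zariski dense} in the base; in the present setting that means $\overline L=M$. When $\overline L\subsetneq M$ it only produces a transverse structure on the restriction $\ev^*\F|_{\overline L\times\mathbb P^1}$, and the evaluation map restricted to a single $\overline L\times\mathbb P^1$ need not dominate $X$, so you cannot descend the structure by a pull-back/push-forward argument alone. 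The whole point of Theorem \ref{THM:iff}, as opposed to its predecessor, is to drop the Zariski-density hypothesis, and the paper does this with Theorem \ref{T:extension}: a transverse projective/affine/Euclidean structure on the very general member of a covering family (here the family of the $\overline L\times\mathbb P^1$'s), with no rational first integral on the members, extends to the ambient space. That extension step --- proved via the truncated Godbillon--Vey system of Lemma \ref{L:preparation} and a case-by-case division argument --- is entirely absent from your proposal, and without it the forward implications ``$\delta=k\Rightarrow$ structure on $\F$'' do not get off the ground.

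A second, smaller gap concerns the converse implications. You argue that if $\F$ is, say, virtually transversely Euclidean then ``the image of $\Phi$ lands in a lower-dimensional configuration space'' so $\delta\le 1$. But the structure you get on $\overline L\times\mathbb P^1$ from the Riccati normal form $dz+\alpha+z\beta+z^2\gamma$ and the structure pulled back from $X$ are a priori \emph{different} transverse structures for the same foliation, and nothing in the orbit heuristic rules out, e.g., $\delta=2$ with $\F$ virtually transversely Euclidean. The paper's Proposition \ref{P:transEucl} handles exactly this: it compares the two structures, and in the case they differ produces a closed rational $1$-form defining the restricted foliation, whose zero/pole divisor is constrained by Lemma \ref{L:horizontal}; an explicit computation then forces $\alpha\wedge\beta=0$, contradicting $\codim\mathcal T\ge 2$. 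The underlying mechanism is the uniqueness statement of Lemma \ref{L:uniqueness} (two non-equivalent structures of the same type force a finer structure or a first integral), which must be combined with the Zariski-density of $L$ in $\overline L$; your proposal gestures at the right trichotomy of subgroups of $\mathrm{PSL}(2,\mathbb C)$ but does not supply this comparison argument.
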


We refer the reader  to Section \ref{S:hierarchy} for the definition of transversely projective, transversely affine, and virtually transversely Euclidean foliations.

Whereas \cite[Theorem 6.5]{2011arXiv1107.1538L} provides necessary conditions for a foliation to be transversely projective/transversely affine/virtually transversely Euclidean;
 Theorem \ref{THM:iff} above  establishes  an \emph{equivalence}
between the codimension $\delta$ and the dimension of the corresponding transverse Lie algebra: $\mathfrak{sl}_2$ for transversely projective, $\mathfrak{aff}$ for transversely affine, and $\mathbb C$ for virtually transversely Euclidean. Even more importantly, Theorem \ref{THM:iff} improves on the previous result by dropping the hypothesis on the Zariski denseness of the general leaf of the tangential foliation. This is achieved by means of Theorem \ref{T:extension}, a result which might has independent interest.

\subsection{Bootstrapping}
On simply connected uniruled manifolds carrying rational curves in general position with respect to the foliation in question, one can push further the analysis carried out to prove Theorem \ref{THM:iff} in order to achieve the more precise result below. Its proof consists in successive applications of Theorem \ref{THM:iff}, where in each step one gains more constraints on the nature of transverse structure.

\begin{THM}\label{THM:bootstrapping}
Let $X$ be a simply connected uniruled projective manifold
and  let $\mathcal F$ be a codimension one foliation on $X$.
Fix an irreducible component $M$ of the space of morphisms $\Mor(\mathbb P^1,X)$ containing a free morphism and let $\Ftang$ be the tangential foliation of $\F$ defined on $M$.  If the general leaf of $\Ftang$ is not algebraic and  the general morphism $f: \mathbb P^1 \to X$ in $M$  intersects non-trivially and transversely all the algebraic hypersurfaces invariant by $\F$ then
$\F$ is defined by a closed rational $1$-form without divisorial components in its zero set.
\end{THM}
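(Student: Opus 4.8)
The plan is to run the bootstrapping announced in the introduction: feed $\F$ to Theorem~\ref{THM:iff}, and then use its conclusion together with the two hypotheses (simple connectedness of $X$, and the general position of the rational curves with respect to the invariant hypersurfaces) to force the value of $\delta$ down step by step. Since the general leaf of $\Ftang$ is not algebraic, Theorem~\ref{THM:iff} already gives $\delta\in\{1,2,3\}$ and, in particular, that $\F$ is transversely projective. Encode its transverse projective structure by a Riccati foliation $\mathcal H$ on a $\mathbb P^1$-bundle $P\to X$ together with a rational section inducing $\F$, and let $\Delta\subset X$ be the polar divisor of the structure. We shall use the standard facts that every irreducible component of $\Delta$ is $\F$-invariant and that the fibrewise holonomy of $\mathcal H$ produces a monodromy representation $\rho\colon \pi_1(X\setminus\Delta)\to \mathrm{PSL}_2(\mathbb C)$ whose image decides which refinement (transversely affine, virtually transversely Euclidean) the structure admits.

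The first key step is to transport the problem to a general member $f\colon \mathbb P^1\to X$ of $M$. By hypothesis $f$ meets every component of $\Delta$, and transversely, at smooth points of $\Delta$; since $f$ is free and moves in a family sweeping out $X$, a Lefschetz-type argument shows that the induced homomorphism $\pi_1(\mathbb P^1\setminus f^{-1}\Delta)\to\pi_1(X\setminus\Delta)$ is surjective. Consequently $\rho$ and the monodromy of the Riccati foliation $f^\ast\mathcal H$ on the ruled surface $f^\ast P\to\mathbb P^1$ have the same image. On $\mathbb P^1$ this Riccati foliation has invariant fibres only over $f^{-1}\Delta$, and transversality guarantees that the local monodromy at each such point is the generic local monodromy of $\mathcal H$ along the corresponding component of $\Delta$. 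Equivalently, pulling back along $f$ the $1$-forms defining the projective structure yields rational $1$-forms on $\mathbb P^1$ which are automatically closed — every $2$-form on a curve vanishes — and have only simple poles, with constant residues, supported on $f^{-1}\Delta$.

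Now comes the bootstrap proper. Combining the freeness of $\pi_1(\mathbb P^1\setminus f^{-1}\Delta)$ with the closedness and residue constraints of the previous step and the residue theorem on $\mathbb P^1$, one forces the image of $\rho$ to be solvable, and then to have finite linear part. Feeding this back through the equivalences of Theorem~\ref{THM:iff} (a transversely projective foliation with monodromy conjugate into the affine group is transversely affine; one whose affine monodromy has finite linear part is virtually transversely Euclidean) successively excludes $\delta=3$ and $\delta=2$. Hence $\delta=1$ and $\F$ is virtually transversely Euclidean. To remove the word ``virtually'', write $\F$ as defined by a rational $1$-form $\omega$ with $d\omega=\eta\wedge\omega$ where $\eta$ is a closed rational $1$-form whose residues $\nu_j$ along the components $\Delta_j$ of $\Delta$ are rational; the associated cyclic cover is trivial precisely when these residues are integers and $\sum_j\nu_j\Delta_j$ is a principal divisor. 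Restricting $\eta$ to the general curve $f$ — which by hypothesis meets \emph{every} $\Delta_j$ — and using that $X$ is simply connected, so $H^1(X,\mathbb C)=0$ and the classes $[\Delta_j]$ control $H^1(X\setminus\Delta,\mathbb C)$, one checks the $\nu_j$ are integral and the relevant class vanishes, so $\F$ is defined by a closed rational $1$-form. Finally, a divisorial zero of such a form is necessarily $\F$-invariant, and closedness then forces $\F$ to admit a rational first integral $g$; pulling back along $g$ a generic logarithmic $1$-form on $\mathbb P^1$ exhibits $\F$ as defined by a closed rational $1$-form without divisorial zeros, completing the proof.

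The main obstacle is the bootstrap step: translating the local, tame monodromy picture on $\mathbb P^1\setminus f^{-1}\Delta$ into an honest reduction of the global transverse structure on $X$, that is, making effective the converse implications in the trichotomy of Theorem~\ref{THM:iff}. This is where the two hypotheses genuinely interact and constitute the technical heart; by comparison the integrality and principality verifications needed to drop ``virtually'' and to clear divisorial zeros are routine, although the former does rely decisively on the curves meeting \emph{all} invariant hypersurfaces.
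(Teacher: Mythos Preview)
Your outline diverges substantially from the paper's argument, and the central step does not go through as written.

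The paper never passes to a monodromy representation on $X\setminus\Delta$ at all. Its bootstrap works entirely on $U\times\mathbb P^1$, where $U$ is the smooth locus of $\overline L$ for a general leaf $L$ of $\Ftang$. One first observes (Lemma \ref{L:horizontal}) that any $\F$-invariant hypersurface meeting the general curve gives $\mathcal G=\ev^*\F_{|U\times\mathbb P^1}$ a horizontal invariant leaf; via Remark \ref{R:6.5bis} this forces the underlying Riccati to fix $\{z=\infty\}$, hence $\delta\le 2$ and $\F$ is transversely affine. Next one shows $\F$ \emph{must} have an invariant hypersurface (else the projective structure plus $\pi_1(X)=0$ gives a first integral, forcing algebraicity of $\Ftang$-leaves). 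Then, if $\F$ were transversely affine but not virtually transversely Euclidean, one compares the affine structure pulled back from $X$ with the intrinsic affine structure coming from the Riccati shape of $\mathcal G$; by the transversality hypothesis these differ, and Lemma \ref{L:uniqueness} forces $\mathcal G$ transversely Euclidean, whence $\F$ is virtually transversely Euclidean by Theorem \ref{T:extension}. A third comparison of two virtually transversely Euclidean structures on $\mathcal G$ removes ``virtually''. The final clause is handled the same way: a divisorial zero pulls back to a horizontal zero on $U\times\mathbb P^1$, forcing $\ev^*\omega=a(z)\,dz$ and hence $\delta=0$, a contradiction.

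Your argument, by contrast, hinges on the sentence ``pulling back along $f$ the $1$-forms defining the projective structure yields rational $1$-forms on $\mathbb P^1$ which are automatically closed --- every $2$-form on a curve vanishes''. This is true but vacuous: any triple $(\omega_0,\omega_1,\omega_2)$ restricted to a curve is closed, yet the associated Riccati over $\mathbb P^1$ can have \emph{arbitrary} monodromy in $\mathrm{PSL}_2(\mathbb C)$. Closedness of the $f^*\omega_i$ imposes no solvability constraint on $\rho$, so the step ``one forces the image of $\rho$ to be solvable, and then to have finite linear part'' is unsupported. You acknowledge this as ``the main obstacle'', but you do not overcome it; the proposal is really an outline with the key argument missing. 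Two further issues: the Lefschetz-type surjectivity $\pi_1(\mathbb P^1\setminus f^{-1}\Delta)\twoheadrightarrow\pi_1(X\setminus\Delta)$ is not standard for free rational curves and would itself need proof; and your endgame for divisorial zeros (``closedness then forces $\F$ to admit a rational first integral'') is false in general --- a closed rational $1$-form with a divisorial zero need not have a rational first integral, and the paper handles this point quite differently.
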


\subsection{Structure of the paper} In Section \ref{S:hierarchy} we recall the basic definitions concerning transverse structure of codimension one foliations and present the key properties which will be used throughout. Section \ref{sec:tomography} is devoted to the proof of extension of transverse structures from fibers of a fibration to the whole ambient manifold. Section \ref{S:tangential} studies the tangential foliations and contain the proofs of Theorems \ref{THM:iff} and \ref{THM:bootstrapping}. Finally, Theorem \ref{THM:A} is proved in Section \ref{S:3}.

\subsection{Acknowledgements}This work was partially supported by the Brazil-France Agreement in Mathematics. F. Loray and F. Touzet acknowledge the support of  ANR-16-CE40-0008 project Foliage and
MATH-AmSud project n\textsuperscript{\underline{\scriptsize o}}88881.117598.  J. V. Pereira acknowledges the support of  CNPq, Faperj,  and Freiburg Institute for Advanced Studies (FRIAS). The research leading to these results has received funding from the People Programme (Marie Curie Actions) of the European Union's Seventh Programme (FP7/2007-2013) under REA grant agreement n\textsuperscript{\underline{\scriptsize o}} [609305].


\section{Transverse structures for codimension one foliations}\label{S:hierarchy}

\subsection{Transversely affine and virtually transversely Euclidean foliations}
Let $\F$ be a codimension one  foliation on a projective manifold $X$. Since any line bundle on $X$ admits rational sections, we may choose  a rational $1$-form $\omega_0$ defining $\F$. The foliation $\F$ is  transversely affine if
there exists a  rational $1$-form $\omega_1$ such that
\[
d\omega_0  = \omega_0 \wedge \omega_1  \quad \text{ and } \quad     d\omega_1  = 0 \, .
\]
We say that the pair $(\omega_0,\omega_1)$ defines a transverse affine structure for $\F$.  Although this definition is made in terms of a particular $1$-form $\omega_0$ defining $\F$, if $\omega_0' = h \omega_0$ is another rational $1$-form defining  $\F$ then $\omega_1' = \omega_1 - \frac{dh}{h}$ is a closed rational $1$-form which satisfies $d\omega_0'= \omega_0'\wedge \omega_1'$.
It is therefore natural to say that two pairs of rational $1$-forms $(\omega_0,\omega_1)$ and $(\omega_0', \omega_1')$ define the same
transverse affine structure for $\F$ if there exists a non-zero rational function $h$ such that
$(\omega_0',\omega_1') = (h \omega_0, \omega_1 - \frac{dh}{h})$.

A transverse affine structure for $\F$ determines   a local system of  first integrals  given by
the branches of the multi-valued function
\[
   F =  \int \left(\exp\int \omega_1 \right) \omega_0 \, .
\]
Different branches of $F$ at a common domain of definition
differ by  left composition with an element of the
affine  group $\Aff(\C) \simeq \mathbb C \rtimes \mathbb C^*$.

If $\omega_1$ is logarithmic with  all its periods  integral multiples of $2\pi \sqrt{-1}$
then  $(\exp\int \omega_1) \omega_0$ is a closed rational $1$-form defining $\F$. In this case we will say
that the transverse affine structure defined by $(\omega_0,\omega_1)$ is transversely Euclidean.

Transverse affine structures for which $\omega_1$ is logarithmic with all its  periods commensurable to $2\pi \sqrt{-1}$
are called virtually transversely Euclidean structures. In this case, the $1$-form $(\exp\int \omega_1) \omega_0$
is not necessarily a rational $1$-form, but after passing to a finite ramified covering of $X$ it becomes
one.

\subsection{Transversely projective foliations}
Similarly, a foliation  $\F$ on $X$ is called transversely projective if for any rational $1$-form $\omega_0$ defining $\F$
there exists rational $1$-forms $\omega_1$ and $\omega_2$ such that
\begin{align*}
d\omega_0 & = \omega_0 \wedge \omega_1  \\
d\omega_1 & = \omega_0 \wedge \omega_2 \\
d\omega_2 & =\omega_1 \wedge \omega_2 \, .
\end{align*}
The triple of rational $1$-forms $(\omega_0, \omega_1,\omega_2)$ defines a projective structure
for $\F$. Two triples $(\omega_0, \omega_1, \omega_2)$ and $(\omega_0',\omega_1',\omega_2')$
define the same projective structure for $\F$ if there exists rational functions $f, g \in \mathbb C(X)$
such that
 \begin{align*}
\omega_0' & = f \omega_0  \\
\omega_1' & = \omega_1 - \frac{df}{f} + g \omega_0 \\
\omega_2' & = f^{-1} \omega_2 + g \omega_1 + g^2 \omega_0 - dg \, .
\end{align*}
As in the case of transversely affine foliations, the transversely projective foliations admit a
canonical collection of local holomorphic first integrals defined on the complement of the polar set
of $\omega_0, \omega_1, \omega_2$, see for instance \cite{MR1955577}.

For a thorough discussion about transversely affine and transversely
projective foliations of codimension one on projective manifolds including a description
of their global structure we invite the reader  to consult \cite{MR3294560} and \cite{MR3522824}  respectively. Here we will review two important features of transversely homogeneous foliations/structures of codimension one  in a
formulation slightly more general than what is currently available in the literature. These two features  play an essential role in this paper.

\subsection{Behaviour under dominant rational maps}
Transversely homogenous structures for codimension one foliations behave rather well
with respect to dominant rational maps as the result below shows.

\begin{lemma}\label{L:wellknown}
	Let $\G$ be a codimension one  foliation on a projective manifold $Y$, $\F$ a codimension one foliation on a projective manifold $X$, and $F: Y \dashrightarrow X$ a dominant rational map such that $\G=F^* \F$. If $\G$ is
	transversely projective, transversely affine, virtually transversely Euclidean then $\F$ is, respectively, transversely projective, transversely affine, virtually transversely Euclidean.
\end{lemma}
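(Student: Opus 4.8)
The plan is to exploit the fact that a dominant rational map $F\colon Y \dashrightarrow X$ admits, after blowing up, a rational section over a Zariski-dense open subset, so that transverse structures can be pulled back along $F$ and pushed forward along such a section. First I would observe that the statement is birational in nature: replacing $Y$ by a resolution of the indeterminacy of $F$ changes neither $\mathcal G$ up to birational equivalence nor the property of being transversely projective/affine/virtually Euclidean (here one uses that these transverse structures are defined in terms of \emph{rational} $1$-forms, so they are insensitive to birational modifications of the ambient manifold). Hence I may assume $F$ is a morphism. Next, since $F$ is dominant between manifolds of dimensions $m = \dim Y \ge n = \dim X$, a general fiber is smooth of dimension $m-n$, and I can choose a subvariety $Z \subset Y$ of dimension $n$ meeting the general fiber transversely in a single point, so that $F|_Z\colon Z \dashrightarrow X$ is a birational map. (Concretely, one slices $Y$ with $m-n$ general very ample divisors.)

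The key step is then pull-back/push-forward of the defining $1$-forms. Starting from a rational $1$-form $\omega_0$ on $X$ defining $\mathcal F$, the hypothesis $\mathcal G = F^*\mathcal F$ says $F^*\omega_0$ defines $\mathcal G$; by the assumed transverse structure on $\mathcal G$ there exist rational $1$-forms $\eta_1$ (and $\eta_2$ in the projective case) on $Y$ satisfying the corresponding system of equations, e.g. $d(F^*\omega_0) = (F^*\omega_0)\wedge \eta_1$, $d\eta_1 = 0$ in the affine case, and similarly the three-equation $\mathfrak{sl}_2$-system in the projective case. Now I restrict everything to $Z$: writing $\iota\colon Z \hookrightarrow Y$ for the inclusion and $\phi = F\circ\iota\colon Z \dashrightarrow X$ for the birational map, I get that $\phi^*\omega_0$, together with $\iota^*\eta_1$ (and $\iota^*\eta_2$), satisfies the same system on $Z$ — pull-back commutes with $d$ and with wedge, so all the structure equations are preserved by $\iota^*$. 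Finally I transport the structure from $Z$ to $X$ via the birational inverse $\psi = \phi^{-1}\colon X \dashrightarrow Z$: the rational $1$-forms $\omega_1 := \psi^*\iota^*\eta_1$ (and $\omega_2 := \psi^*\iota^*\eta_2$) on $X$ then satisfy $d\omega_0 = \omega_0 \wedge \omega_1$ (resp. the full system), because $\psi^*\phi^*\omega_0 = \omega_0$ as rational forms. In the virtually transversely Euclidean case one additionally checks that $\iota^*\eta_1$ remains logarithmic with periods commensurable to $2\pi\sqrt{-1}$ — logarithmic poles and residues pull back to logarithmic poles and residues, and the period lattice can only shrink under restriction to a subvariety, so commensurability is preserved; then $\psi^*$ of this is again logarithmic with the same residues. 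This gives the desired transverse structure on $\mathcal F$ in all three cases.

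The main obstacle I anticipate is a bookkeeping one rather than a conceptual one: one must make sure that the chosen slice $Z$ is generic enough to avoid (i) the indeterminacy and critical loci of $F$, (ii) the poles of $\omega_0, \eta_1, \eta_2$, and (iii) the locus where $F$ fails to be a submersion, so that $\phi$ really is birational and the restricted forms are genuine (nonzero) rational forms defining the restricted foliation rather than degenerating identically. A secondary subtlety is verifying that the equivalence relation on triples/pairs (the gauge transformations listed after each definition in Section~\ref{S:hierarchy}) is respected, so that the structure produced on $\mathcal F$ is independent of the auxiliary choices; but since we only need \emph{existence} of \emph{some} transverse structure on $\mathcal F$, this is not strictly necessary for the statement as phrased. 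I expect that once $Z$ is chosen generically the verification of the structure equations is a formal consequence of functoriality of $d$, $\wedge$, and pull-back, so the real content is entirely in the reduction to a birational map plus the transversality of the slice.
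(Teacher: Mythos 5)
There is a genuine gap at the very first reduction: you cannot, in general, slice a dominant rational map $F\colon Y\dashrightarrow X$ down to a \emph{birational} map. Cutting $Y$ by $m-n$ general very ample divisors produces a subvariety $Z$ of dimension $n$ that meets a general fiber $F^{-1}(x)$ in $\deg(F^{-1}(x))$ points with respect to the chosen polarization, and this number is typically strictly greater than one; you obtain a multisection, not a section, so $F|_Z\colon Z\dashrightarrow X$ is only generically \emph{finite}. Worse, when $\dim Y=\dim X$ and $F$ itself is generically finite of degree $d>1$ (e.g.\ a double cover), your slicing step is empty and no birational reduction is available at all. Since the case of a birational map is indeed a formal triviality (pull back, push forward, done), your plan in effect assumes away the entire content of the lemma, which is precisely \emph{descent of the transverse structure along a generically finite map of degree greater than one}: the forms $\eta_1,\eta_2$ on $Y$ (or on your slice $Z$) have no reason to be pull-backs of rational forms on $X$, and $\psi^*\iota^*\eta_1$ is not defined when $\psi=\phi^{-1}$ does not exist.

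For comparison, the paper's proof handles the projective and affine cases by citation (\cite[Lemma 6.2]{2011arXiv1107.1538L}, going back to \cite[Lemma 2.1 and Lemma 3.1]{MR1955577}) and only writes out the new, virtually transversely Euclidean case. There the descent is carried out by first reducing (via hyperplane sections, legitimately, since only equidimensionality is needed) to a generically finite map, then passing to the Galois closure, and using that --- in the absence of a rational first integral --- a closed rational $1$-form $\eta$ defining $\G$ is unique up to scalar, so each deck transformation $\varphi$ satisfies $\varphi^*\eta=\lambda\eta$ with $\lambda$ a root of unity. Writing $F^*\omega=h\eta$, the logarithmic form $dh/h$ is then invariant under the deck group and descends to a logarithmic form $\beta$ on $X$ with rational residues satisfying $d\omega=\beta\wedge\omega$. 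Some argument of this Galois-invariance-and-descent type (or an averaging over the fibers of $F$) is unavoidable; without it the proof does not go through.
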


\begin{proof}
	This is essentially the content  of  \cite[Lemma 6.2]{2011arXiv1107.1538L}, a geometric translation of  \cite[Lemma 2.1 and Lemma 3.1]{MR1955577}.
    The only part of the statement not explicit proved there is that when $\G$ is virtually transversely Euclidean then the same holds true for  $\F$.
	For the sake of completeness let us present  a proof of this implication.
	
	Notice that we can assume from the beginning that $\F$ does not admit  a rational first integral, as otherwise there would be nothing to prove.
    After cutting $Y$ with hyperplane sections we can assume that $Y$ and $X$ have the same dimension.
    We can further assume that $\G$ is transversely Euclidean and that the generically finite rational map
    $F: Y \dashrightarrow X$ is Galois. Let $\omega$ be a rational $1$-form defining $\F$. Let $\eta$ be a closed rational $1$-form defining $\G$.
    If $\varphi$ is one of the deck transformation of $F$ then
	$\varphi^* \eta = \lambda \eta$ for some root of the unity $\lambda$. Since $\eta$ and $F^* \omega$ define the same foliation,
    there exists a rational function $h$ such that $F^*\omega = h \eta$.  Of course $\varphi^* h = \lambda^{-1} h$ and consequently $\varphi^* \frac{dh}{h} = \frac{dh}{h}$.
    The closedness of $\eta$ implies
	\[
		d F^* \omega = \frac{dh}{h} \wedge F^*\omega.
	\]
	 The invariance of $\frac{dh}{h}$ under the deck transformations allows us to find a logaritmic $1$-form
	 $\beta$ on $X$ with rational residues such that $\frac{dh}{h} = F^* \beta$. Hence $d\omega  = \beta \wedge \omega$
    and $\F$ is virtually transversely Euclidean.
\end{proof}

The converse statement is trivially true: if $\mathcal F$ is transversely projective,
transversely affine, or virtually transversely Euclidean,
then the same holds true for $\mathcal G$. Indeed, it suffices to pull-back the $1$-forms $\omega_i$ defining the structure
by $F$.

\subsection{Uniqueness of tranversely homogeneous structures}
Transversely homogeneous structures for codimension one foliations are
unique except for very special exceptions which are described by the next
result.

\begin{lemma}\label{L:uniqueness}
	Let $\F$ be a codimension one foliation on a projective manifold.
	The following assertions hold true.
	\begin{enumerate}
		\item If $\F$ admits two non-equivalent transversely projective structures then $\F$ admits a virtually transversely Euclidean structure.
		\item If $\F$ admits two non-equivalent transversely affine structures then $\F$ is defined by a closed rational $1$-form, i.e. $\F$ admits a transversely Euclidean structure.
		\item If $\F$ admits two non-equivalent virtually transversely Euclidean structure then $\F$ admits a rational first integral.
	\end{enumerate}
\end{lemma}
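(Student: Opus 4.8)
The plan is to treat the three cases in parallel, exploiting in each one the explicit description given above of when two transverse structures are equivalent. In every case one is given two structures on the same foliation $\F$; after multiplying the defining $1$-forms by suitable rational functions we may assume both structures are written with respect to a \emph{common} rational $1$-form $\omega_0$ defining $\F$. The non-equivalence hypothesis will then say precisely that the two structures differ in a way that is not accounted for by the gauge freedom recorded above, and the strategy is to manufacture, out of the difference, a new (more rigid) transverse structure: a logarithmic closed $1$-form with rational residues in cases (1) and (2), and a rational first integral in case (3).

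Concretely, for case (2): let $(\omega_0,\omega_1)$ and $(\omega_0,\omega_1')$ be two transversely affine structures sharing the same $\omega_0$. From $d\omega_0=\omega_0\wedge\omega_1=\omega_0\wedge\omega_1'$ we get $\omega_0\wedge(\omega_1-\omega_1')=0$, so $\omega_1-\omega_1' = g\,\omega_0$ for some rational function $g$. Since $d\omega_1=d\omega_1'=0$, we obtain $dg\wedge\omega_0 + g\,d\omega_0=0$, i.e. $dg\wedge\omega_0 = -g\,\omega_0\wedge\omega_1$, which says $\frac{dg}{g}\equiv -\omega_1 \pmod{\omega_0}$; combined with the gauge freedom $(\omega_0,\omega_1)\mapsto(h\omega_0,\omega_1-\frac{dh}{h})$ one checks that rescaling $\omega_0$ by an appropriate $h$ makes $\omega_1$ itself closed and logarithmic, hence $\F$ is defined by the closed rational $1$-form $(\exp\int\omega_1)\omega_0$, which (after the rescaling) is rational. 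For case (1), one runs the analogous computation starting from two projective structures $(\omega_0,\omega_1,\omega_2)$ and $(\omega_0,\omega_1',\omega_2')$: from $d\omega_0=\omega_0\wedge\omega_1=\omega_0\wedge\omega_1'$ one again gets $\omega_1'=\omega_1+g\omega_0$, then substituting into $d\omega_1'=\omega_0\wedge\omega_2'$ and comparing with $d\omega_1=\omega_0\wedge\omega_2$ forces $\omega_2' = \omega_2 + g\omega_1 + g^2\omega_0 - dg + (\text{something})\,\omega_0$; the point is that the \emph{only} freedom compatible with the three projective equations is exactly the gauge action written above, so two structures are non-equivalent iff the residual data they differ by is nontrivial, and that residual data is a closed logarithmic $1$-form — yielding a transversely affine, in fact virtually transversely Euclidean, structure. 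Case (3) is the easiest: two virtually transversely Euclidean structures correspond, after passing to a finite cover, to two closed rational $1$-forms $\eta$, $\eta'$ defining the same foliation, so $\eta' = h\eta$ with $h$ rational and $dh\wedge\eta=0$, i.e. $dh/h$ is a rational multiple of $\eta$; closedness of both forms forces $dh/h$ to be closed and logarithmic with rational periods, and then $h$ (or a root of it) is a rational first integral on the cover, which descends — or, more directly, the ratio of the two multivalued first integrals is single-valued, giving a rational first integral of $\F$.

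The main obstacle I expect is the bookkeeping in case (1): one must verify carefully that the system of three projective equations really does pin down the gauge group to the three-parameter family $(f,g)\mapsto(f\omega_0,\ \omega_1-\tfrac{df}{f}+g\omega_0,\ f^{-1}\omega_2+g\omega_1+g^2\omega_0-dg)$ with no extra hidden freedom, so that ``non-equivalent'' can be converted into the existence of a genuine new invariant $1$-form, and then that this new $1$-form is not merely closed but logarithmic with rational (equivalently, commensurable) periods — the latter presumably coming from the monodromy of the projective structure taking values in $\mathrm{PSL}_2(\mathbb C)$ together with the fact that the two structures induce the same foliation, forcing the discrepancy cocycle to be diagonalizable with roots-of-unity eigenvalues. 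A secondary technical point is handling the finite covers in cases (1) and (3) compatibly with the word ``virtually'', but Lemma \ref{L:wellknown} lets one push any structure found upstairs back down to $X$, so this is routine once the covers are set up.
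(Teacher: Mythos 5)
Your cases (2) and (3) are essentially sound. For (2) you can in fact shortcut your computation: with a common $\omega_0$, the difference $\omega_1-\omega_1'=g\,\omega_0$ is automatically closed (both forms are closed) and nonzero, so $g\,\omega_0$ is already the desired closed rational $1$-form defining $\F$; no gauge rescaling is needed (and note a sign slip: your identity should read $\frac{dg}{g}\equiv +\omega_1 \pmod{\omega_0}$). For (3), the paper argues exactly along the lines of your ``more directly'' remark, but without ever passing to a cover: the difference $\omega_1-\omega_1'$ of the two logarithmic forms is closed, proportional to $\omega_0$, and has periods commensurable to $2\pi\sqrt{-1}$, so $\exp\bigl(\lambda\int(\omega_1-\omega_1')\bigr)$ is a non-constant rational first integral for suitable $\lambda\in\C^*$. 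Your cover-based variant works but adds the descent step and the need to choose a single cover trivializing both structures, which the direct argument avoids. Be aware that the paper does not reprove (1) and (2) at all --- it cites Lemma 2.20 of \cite{MR2324555} --- so you are attempting more than the paper does.

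The genuine gap is in case (1), precisely at the step you flag. The assertion that ``the residual data is a closed logarithmic $1$-form'' is not justified, and the mechanism you propose (monodromy in $\mathrm{PSL}_2(\C)$ forcing a diagonalizable discrepancy cocycle with roots-of-unity eigenvalues) is not where the conclusion comes from; nothing in the hypotheses controls the monodromy. The actual argument is a direct computation with the structure equations. After normalizing so that both triples share $\omega_0$, your relation $\omega_1'=\omega_1+g\,\omega_0$ lets you apply the gauge $(f,g)=(1,g)$ to one triple and assume $\omega_1'=\omega_1$ as well; then the second structure equation forces $\omega_2'-\omega_2=\ell\,\omega_0$ with $\ell$ a rational function, nonzero by non-equivalence. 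Subtracting the two third structure equations gives $d(\ell\,\omega_0)=\omega_1\wedge(\ell\,\omega_0)$, which unwinds (using $d\omega_0=\omega_0\wedge\omega_1$) to $\bigl(d\ell-2\ell\,\omega_1\bigr)\wedge\omega_0=0$, i.e.\ $\omega_1=\tfrac12\frac{d\ell}{\ell}+m\,\omega_0$ for some rational $m$. A further admissible gauge replaces $\omega_1$ by $\tfrac12\frac{d\ell}{\ell}$, which is logarithmic with residues in $\tfrac12\Z$ because $\ell$ is a \emph{rational function}; this is exactly what produces a virtually (rather than genuinely) transversely Euclidean structure $(\omega_0,\tfrac12 d\log\ell)$. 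So the rationality of the residues comes from $\ell\in\C(X)^*$, not from any monodromy consideration, and without this computation your case (1) is an assertion rather than a proof.
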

\begin{proof}
The first two possibilities are described  in \cite[Lemma 2.20]{MR2324555}. For describing the last possibility, assume that $\F$ is defined
a rational $1$-form $\omega_0$ and observe that the existence
of two non-equivalent virtually transversely Euclidean structures is equivalent to the existence of two linearly independent logarithmic $1$-forms
$\omega_1, \omega_1'$, both  with periods commensurable to $\pi i$ and satisfying
\[
    d \omega_0 = \omega_0 \wedge \omega_1 = \omega_0 \wedge \omega_1' \, .
\]
Their difference $\omega_1 - \omega_1'$ is non-zero, proportional to $\omega_0$, and a suitable complex multiple of it has
all its periods in $2 \pi i \mathbb Z$. In other words,  there  exists   a constant $\lambda \in \mathbb C^*$
such that
\[
    \exp \left( \int \lambda ( \omega_1 - \omega_1') \right)
\]
is a rational function constant along the leaves of $\F$.
\end{proof}


\section{Extension of transverse structures}\label{sec:tomography}
The purpose of this section is to prove the following result.

\begin{thm}\label{T:extension}
	Let $\F$ be a codimension one foliation on a projective manifold $X$. Suppose there exists a projective variety $B$ and a morphism $f : X \to B$ with general fiber irreducible  such that the $\F$ is transversely projective/transversely affine/virtually transversely Euclidean when restricted to a very general fiber of $f$. If the restriction of $\F$ to the very general fiber  does not admit a rational first integral then $\F$ is, respectively, transversely projective/transversely affine/virtually transversely Euclidean.
\end{thm}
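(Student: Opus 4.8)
The plan is to transport the transverse structure from a very general fiber to the whole manifold $X$ by a "tomographic" argument: vary the fiber, check that the structures on different fibers agree on overlaps, and glue. First I would reduce to the case where $B$ is a curve by cutting $B$ with general hyperplane sections: a transverse structure on $X$ restricts to a transverse structure on the preimage of a general hyperplane section of $B$, and by induction on $\dim B$ it suffices to treat $\dim B = 1$; the no-rational-first-integral hypothesis on the very general fiber is preserved under this reduction since the fibers are unchanged. Having done this, I want to produce the defining rational $1$-forms $\omega_1$ (and $\omega_2$ in the projective case) globally on $X$. Fix a rational $1$-form $\omega_0$ defining $\F$ on $X$. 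For a very general point $b \in B$, write $X_b = f^{-1}(b)$; by hypothesis there are rational $1$-forms on $X_b$ completing $\omega_0|_{X_b}$ to a transverse structure of the relevant type. By Lemma \ref{L:uniqueness}, together with the assumption that $\F|_{X_b}$ has no rational first integral (which rules out the exceptional non-uniqueness in each case), this completing data is \emph{unique} on $X_b$. Uniqueness is the crucial point: it forces the locally-defined $\omega_i$'s to be independent of the chosen fiber, hence to patch together into global rational $1$-forms on $X$.

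More precisely, I would argue as follows. Consider the relative situation over the (now one-dimensional) base $B$. On a Zariski-open $U \subset X$ lying over a Zariski-open $B^{\circ}\subset B$ whose fibers are smooth and irreducible, one can look for $\omega_1$ (resp. $\omega_1,\omega_2$) as rational $1$-forms on $U$ satisfying the structure equations $d\omega_0 = \omega_0 \wedge \omega_1$, etc. The space of solutions of this over-determined system, restricted to a single fiber $X_b$, is a torsor under the relevant ambiguity group described in the equivalence relations from Section \ref{S:hierarchy}; by Lemma \ref{L:uniqueness} and the no-first-integral hypothesis, it is a single point on each very general fiber. Thus the candidate $\omega_i$ is well-defined and single-valued on the union of the very general fibers, which is dense; the issue is whether it is \emph{rational} on $X$ rather than merely a (possibly multivalued or merely meromorphic-along-fibers) object. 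To settle this I would invoke the behaviour of these structure equations in families: the $\omega_i$ depend algebraically on the fiber parameter because they are determined by $\omega_0$ through a system of linear PDEs with coefficients rational in all variables, and a choice of local solution along one fiber spreads out to a local solution in the total space (an integrability/Frobenius-type argument, or: extend $\omega_1$ off the fiber arbitrarily as a rational $1$-form and correct it, using that $d\omega_0 - \omega_0\wedge\omega_1$ vanishes on fibers hence is divisible by $df$, which gives enough room to adjust the $df$-component). The conclusion is a global rational triple $(\omega_0,\omega_1,\omega_2)$, resp. pair $(\omega_0,\omega_1)$, on $X$ satisfying the structure equations, i.e. a transverse projective/affine structure for $\F$.

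For the virtually transversely Euclidean case there is an extra point: one must control periods. Here $\omega_1$ is forced to be closed (it is the unique $1$-form with $d\omega_0 = \omega_0\wedge\omega_1$, and closedness holds on each very general fiber, hence identically), and logarithmic with residues; one then checks that its periods over loops in $X$ are commensurable to $2\pi i$. A loop in $X$ can be pushed, after multiplying by an integer, into a very general fiber up to homotopy once we know $\pi_1$ of the total space is generated by fiber loops and base loops — and the base contributes periods that are again detected on nearby fibers by continuity of periods in a family, together with the commensurability on each fiber. Concretely I would run the same Galois-covering trick as in the proof of Lemma \ref{L:wellknown}: pass to a finite cover killing the denominators of residues and making the relevant period lattice integral, reducing to the genuinely Euclidean (closed rational $1$-form) case, where the statement is just that a rational closed $1$-form on very general fibers defining $\F$ there extends to a rational closed $1$-form on $X$ — which follows from the rationality discussion above plus $d(\omega_1)=0$ globally.

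The main obstacle I expect is not the gluing \emph{per se} — uniqueness via Lemma \ref{L:uniqueness} makes the patching formal — but rather establishing \emph{rationality (algebraicity) of the extended forms over the total space}, i.e. upgrading "well-defined on each very general fiber" to "restriction of a global rational object on $X$". This requires a genuinely relative/algebraic-families argument for the solvability of the structure equations (semicontinuity of the solution space, or a spreading-out argument in the total space), and care that one does not merely obtain something holomorphic on a Zariski-dense set of fibers with uncontrolled behaviour in the $B$-direction. The period/commensurability bookkeeping in case (3) is the second, more technical, hurdle, handled by the finite-covering reduction already used in Lemma \ref{L:wellknown}.
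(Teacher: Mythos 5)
Your overall strategy (normalize globally defined rational data using the fiberwise structure, then control the error in the base direction) points in the right direction, and you correctly flag rationality over the total space as the crux; but the mechanism you propose for it does not work, and this is where the proof actually lives. First, Lemma \ref{L:uniqueness} combined with ``no rational first integral on the fiber'' gives uniqueness only of \emph{virtually transversely Euclidean} structures; for the affine and projective cases the exceptional non-uniqueness is excluded only by assuming the fiber restriction is not transversely Euclidean, resp.\ not virtually transversely Euclidean, which is not part of the hypothesis. Second, and more seriously, even where the \emph{structure} on a fiber is unique, the representing forms are not: in the projective case $(\omega_1,\omega_2)$ is determined only up to the gauge $(\omega_1+g\omega_0,\ \omega_2+g\omega_1+g^2\omega_0-dg)$, and in every case a form on $X$ restricting to the given one on each fiber is determined only modulo $\mathbb{C}(X)\cdot df$. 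So the torsor is not a point, the patching is not formal, and nothing in your argument produces a \emph{rational} form on $X$ whose exterior derivative satisfies the structure equations on $X$ rather than merely fiberwise (knowing $d\omega_1|_{X_b}=0$ for very general $b$ only gives $d\omega_1=df\wedge\theta$). The paper resolves exactly this by (i) starting from a global rational Godbillon--Vey sequence on $X$, (ii) using the transverse structure of the scheme-theoretic generic fiber over $\mathbb{C}(B)$ to make the correcting function rational in the base variable (Lemma \ref{L:preparation}), and (iii) an explicit computation (the Proposition following it) showing that the residual terms $\omega_i=g\,df$ either vanish --- giving the structure on $X$ --- or force a rational first integral on the fibers. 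That dichotomy is where the no-first-integral hypothesis is actually consumed; in your proposal it is spent (incorrectly) on uniqueness and never reappears.

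The virtually transversely Euclidean case has a separate gap. You plan to control the periods of the glued $\omega_1$ by pushing loops into fibers, but the relevant ``periods'' include residues of the logarithmic form $\omega_1$ along its polar divisor, and components of that divisor can be vertical (for instance fibers of $f$ themselves) or otherwise disjoint from the general fiber, so their residues are invisible to the restriction to fibers; continuity of periods in the family does not reach them. The paper sidesteps this entirely: it first gets transverse affineness from the previous step and then applies the structure theorem for transversely affine foliations via Lemma \ref{L:persistence} --- if $\F$ were affine but not virtually transversely Euclidean it would be the pull-back of a Riccati foliation on a surface, a surface inside a fiber would dominate that Riccati surface (since there is no first integral on fibers), and Lemma \ref{L:wellknown} would give a contradiction. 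Without some such global input, the period bookkeeping you describe cannot be closed.
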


In the statement of Theorem \ref{T:extension} one can replace the existence of the morphism $f$ by the existence
of a covering  family $\mathscr Z$ of subvarieties of  $X$  such that  $\F$ is transversely projective/transversely affine/virtually transversely Euclidean
when restricted to a very general element of the family in order to achieve the same conclusion. One reduces to the statement above
by pulling back the foliation to the total space of the family of cycles, applying Theorem \ref{T:extension} to this pull-back foliation, and
descending the conclusion to $X$ using Lemma \ref{L:wellknown}.

 Before proceeding to the proof of Theorem \ref{T:extension} we point out that the  projectiveness of $X$ is essential for the validity of the Lemma \ref{L:persistence} as the example below shows.

\begin{example}
	Let $F = E\times E$ be the square of an elliptic curve and consider the automorphism of it defined
	by $\varphi(x,y) = (2x + y , x +y)$. This is the standard example of an hyperbolic automorphism. It leaves invariant two linear foliations on $E\times E$, say $\F_+, \F_-$, defined by closed holomorphic $1$-forms
	$\omega_+, \omega_-$ respectively. As can be easily seen $\varphi^* \omega_{\pm} = \lambda_{\pm} \omega_{\pm}$  where $\lambda_{\pm}$ are the eigenvalues of the matrix
	\[
	\left(\begin{matrix}
	2&1\\1&1
	\end{matrix}\right).
	\]
	If we now consider  a projective curve $B$  and a suspension of
	a representation $\pi_1(B) \to \Aut(\F_{\pm})$, containing $\varphi$ in its image,  we get a genuinely transversely affine foliation
	on compact complex manifold $X$ fibering over $B$ with fibers $F$ such that the restriction to any fiber is transversely Euclidean and without rational first integral.
	The ambient manifold $X$ is not K\"{a}hler. Indeed, aiming at a contradicition, suppose that $X$ is K\"{a}hler.  Then $\varphi$ has to preserve a K\"{a}hler class and  \cite[Proposition 2.2]{MR521918} implies  that some non trivial power of $\varphi$ is a translation. This gives the sought contradiction.
\end{example}

\subsection{Proof of Theorem \ref{T:extension}}
Theorem \ref{T:extension} follows from the  next three results combined.

\begin{lemma}\label{L:preparation}
	Let $\F$ be a codimension one foliation on a projective manifold $X$. Assume that $X$ is endowed with a fibration $f : X \to B$ such  that the restriction of $\F$ to the very general fiber of $f$  is
	transversely projective. Then there exists rational $1$-forms
	$\omega_0, \omega_1, \omega_2$ and $\omega_3$ with the following properties.
	\begin{enumerate}
		\item The $1$-forms $\omega_0, \ldots, \omega_3$ satisfy the system of equations
	\begin{equation}\label{eq:TruncatedGodbillonVey}
	\left\{\begin{matrix}
		d\omega_0=\omega_0\wedge\omega_1\hfill\\
		d\omega_1=\omega_0\wedge\omega_2\hfill\\
		d\omega_2=\omega_0\wedge\omega_3+\omega_1\wedge\omega_2\\
	\end{matrix}\right.
	\end{equation}
	\item The foliation $\F$ is defined by $\omega_0$.
	\item The restriction of $1$-form $\omega_3$ to a general fiber  of $f$ is zero.
	\item If we further assume that the restriction of $\F$ to a very general fiber of $f$ is transversely affine then
	we have that both $\omega_2$ and $\omega_3$ restrict to zero at a general fiber of $f$.
	\end{enumerate}
\end{lemma}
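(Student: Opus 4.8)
The plan is to start from a transversely projective structure on a very general fiber and extend it "fiberwise" to a global object on $X$. First I would choose a rational $1$-form $\omega_0$ on $X$ defining $\F$; its restriction to a very general fiber $X_b = f^{-1}(b)$ defines $\F|_{X_b}$, which is transversely projective by hypothesis, so there exist rational $1$-forms $\alpha_1, \alpha_2$ on $X_b$ completing $\omega_0|_{X_b}$ to a projective triple. The key point is that the data of such a triple over the very general fiber varies algebraically in $b$: one forms the relative de Rham / jet construction (or, more concretely, the projective bundle associated to the transversely projective structure — a $\mathbb{P}^1$-bundle with a Riccati foliation over $X_b$) and observes that, as $b$ moves in a Zariski-open subset of $B$, these glue into rational $1$-forms on $X$. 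So I would produce rational $1$-forms $\omega_1, \omega_2$ on $X$ whose restrictions to the general fiber give a projective triple for $\F|_{X_b}$.

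Next I would measure the failure of $(\omega_0,\omega_1,\omega_2)$ to be a global projective triple. The Frobenius-type integrability $\omega_0\wedge d\omega_0 = 0$ (valid since $\omega_0$ defines a foliation on all of $X$) gives $d\omega_0 = \omega_0\wedge\omega_1$ after modifying $\omega_1$ by a multiple of $\omega_0$; this is the first equation, and it can be arranged globally, not just fiberwise, because the fiberwise equation plus integrability pins down $\omega_1$ modulo $\omega_0$. Then $d(d\omega_0)=0$ forces $\omega_0\wedge(d\omega_1 - ?) = 0$-type relations; one writes $d\omega_1 = \omega_0\wedge\omega_2 + (\text{something})\wedge\omega_0$ and absorbs the correction, getting the second equation globally. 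For the third equation the fiberwise projective structure only guarantees $d\omega_2 - \omega_1\wedge\omega_2$ is a multiple of $\omega_0$ \emph{when restricted to a fiber}; in general one only gets
\begin{equation*}
d\omega_2 = \omega_0\wedge\omega_3 + \omega_1\wedge\omega_2
\end{equation*}
for some rational $2$-form which, by the same integrability/cocycle bookkeeping applied to $d(d\omega_1)=d(d\omega_2)=0$, can be taken of the form $\omega_0\wedge\omega_3$ with $\omega_3$ a rational $1$-form. This establishes \eqref{eq:TruncatedGodbillonVey} together with (2). Property (3), that $\omega_3|_{X_b}=0$, is then automatic: restricting the third equation to $X_b$ and using that $(\omega_0|_{X_b},\omega_1|_{X_b},\omega_2|_{X_b})$ \emph{is} a genuine projective triple on $X_b$ forces $\omega_0|_{X_b}\wedge\omega_3|_{X_b}=0$, i.e. $\omega_3|_{X_b}$ is a multiple of $\omega_0|_{X_b}$; a further normalization (subtracting the appropriate multiple of $\omega_1$ from $\omega_2$, which preserves the shape of the system while altering $\omega_3$) kills it on the general fiber. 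Item (4) is the same argument run with "affine" in place of "projective": the fiberwise affine structure means $\omega_2|_{X_b}$ is already a multiple of $\omega_0|_{X_b}$, so after the analogous normalization $\omega_2$ and hence $\omega_3$ restrict to zero on the general fiber.

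The main obstacle I expect is the \emph{algebraicity and rationality} of the fiberwise-defined forms $\omega_1,\omega_2$ — that is, showing the transversely projective structures on the very general fibers assemble into honestly rational (not merely fiberwise-meromorphic, or multivalued) $1$-forms on $X$. This requires care: one must use that "very general" fibers all carry such a structure, invoke a spreading-out / constructibility argument over $B$, and control the monodromy of the relative projective structure. The rest — extracting the system \eqref{eq:TruncatedGodbillonVey} from $d^2=0$ and the foliation integrability, and performing the normalizations for (3) and (4) — is essentially linear algebra over the function field $\mathbb{C}(X)$ together with the standard manipulations of the Godbillon–Vey-type sequence, and should go through routinely.
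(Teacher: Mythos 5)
Your plan runs in the opposite direction from the paper's, and the step you yourself flag as the ``main obstacle'' --- assembling the transversely projective structures of the very general fibers into honestly rational $1$-forms $\omega_1,\omega_2$ on $X$ --- is precisely the gap. You propose to start from the fiberwise triples and spread them out, but these triples are neither unique on each fiber (cf.\ Lemma~\ref{L:uniqueness}) nor a priori chosen coherently in $b$, and ``very general'' only gives you the structure off a countable union of subvarieties, so some constructibility argument is unavoidable; your proposal names the difficulty without supplying the idea that resolves it. The paper avoids extending the fiberwise forms altogether: it starts from a \emph{global} Godbillon--Vey quadruple $(\alpha_0,\alpha_1,\alpha_2,\alpha_3)$ satisfying \eqref{eq:TruncatedGodbillonVey}, which exists for any codimension one foliation with no transversality hypothesis (this part of your sketch, deriving the system from $d^2=0$ and division over $\mathbb C(X)$, is correct). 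It then passes to the schematic generic fiber $\mathscr X$ over $\mathbb C(B)$ --- where ``transversely projective on the very general fiber'' upgrades to ``transversely projective on $\mathscr X$'' by constructibility --- and uses the rigidity of Godbillon--Vey sequences to see that the restricted quadruple differs from a genuine projective triple only by replacing $\beta_2$ with $\beta_2+f\beta_0$ for a single rational function $f$. Since $\mathbb C(\mathscr X)=\mathbb C(X)$, this $f$ lifts for free, $\omega_2:=\alpha_2+f\alpha_0$ is rational on $X$, and the uniqueness of the $\omega_3$ completing the system forces $\omega_3$ to vanish on the generic fiber. So the only object that must be transported from the fibers to $X$ is one rational function, not a pair of $1$-forms.

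Two smaller points. First, your normalization for item (3) is miscalibrated: once the restricted triple is genuine you correctly get $\omega_0|_{X_b}\wedge\omega_3|_{X_b}=0$, but the correction that kills $\omega_3$ on the fiber is a modification of $\omega_2$ by a multiple of $\omega_0$ (as in the paper), not by a multiple of $\omega_1$; changing $\omega_2$ by $g\omega_1$ alone does not preserve the shape of the system \eqref{eq:TruncatedGodbillonVey} unless $\omega_1$ is simultaneously gauged. Second, your remark that the first two equations can be arranged globally is true but for a simpler reason than you give: they hold for any foliation by the Godbillon--Vey construction, with no input from the fibers. If you reorganize your argument to take the global Godbillon--Vey sequence as the starting point and use the fibers only to produce the correcting function, the gap closes and you recover the paper's proof.
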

\begin{proof}
	We start by choosing $\alpha_0, \alpha_1, \alpha_2, \alpha_3$ rational $1$-forms on $X$ such that
	$\alpha_0$ defines $\F$ and which satisfy the system equations (\ref{eq:TruncatedGodbillonVey}). The existence of such $1$-forms is well-known and can be traced back to Godbillon-Vey, see \cite[Section 2.1]{MR2324555}.

	Let $\G$ be the restriction  of $\F$ to the (schematic) generic fiber $\mathscr X$ of  $f$.
    Hence $\mathscr X$  is a projective manifold defined over the function field $\mathbb C(B)$ of the basis of the fibration
    and $\G$ is  a codimension one foliation on $\mathscr X$. The algebraic nature of transversely projective structures
    implies that $\G$ is transversely projective. If we denote by $\beta_0, \ldots, \beta_3$ the
    restrictions of $\alpha_0, \ldots, \alpha_3$ to $\mathscr X$ then, according to \cite[Section 2.3]{MR2324555},
    we can replace $\beta_2$ by 	$\beta_2 + f \beta_0$ where $f$ is a rational function on $\mathscr X$ in such a way that   they now satisfy
	\[ \left\{
        \begin{matrix}
	       d\beta_0=\beta_0\wedge\beta_1\hfill\\
	       d\beta_1=\beta_0\wedge\beta_2\hfill\\
	       d\beta_2=\beta_1\wedge\beta_2\\
	    \end{matrix}\right.
	\]
	
	Consider a lift of $f$ to a rational function on $X$ (still denoted by $f$) and
	set $\omega_0= \alpha_0$, $\omega_1 = \alpha_1$, $\omega_2 = \alpha_2 + f \alpha_0$.
	According to \cite[Corollary 2.4]{MR2324555} there exists a unique $\omega_3$ such that
	the system of equations (\ref{eq:TruncatedGodbillonVey}) is satisfied. Moreover, the uniqueness
	of $\omega_3$ implies that its restriction to the generic fiber of $f$ must be zero. This is sufficient
	to prove item (3) of the lemma. The proof if item (4) is completely similar.
\end{proof}

    \begin{prop}
	   Let $\F$ be a codimension one foliation on a projective manifold $X$.
       Assume that $X$ is endowed with a fibration $f : X \to B$ and that the restriction of $\F$ to a general fiber of $f$ does not admit a rational first integral.
       If there exists $1$-forms $\omega_0, \omega_1, \omega_2, \omega_3$ satisfying item (3) of Lemma \ref{L:preparation} then $\mathcal F$ is transversely projective.
       Similarly, if there exists $1$-forms $\omega_0, \omega_1, \omega_2, \omega_3$ satisfying item (4)  of Lemma \ref{L:preparation} then $\mathcal F$ is transversely affine.
    \end{prop}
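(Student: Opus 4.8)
The goal is to upgrade the fibrewise truncated Godbillon--Vey data produced by Lemma \ref{L:preparation} into a genuine (projective, resp. affine) structure on all of $X$. The key point is that item (3) of Lemma \ref{L:preparation} gives us $1$-forms $\omega_0,\dots,\omega_3$ satisfying the truncated system (\ref{eq:TruncatedGodbillonVey}) with $\omega_3$ vanishing on the general fibre of $f$; we must show that $\omega_3$ can be modified, using the allowed gauge freedom (replacing $\omega_2$ by $\omega_2 + g\omega_0$ and $\omega_1$ by $\omega_1 - \tfrac{dh}{h} + g\omega_0$, etc.), so as to make it vanish \emph{identically}, which by the last equation of (\ref{eq:TruncatedGodbillonVey}) $d\omega_2 = \omega_0\wedge\omega_3 + \omega_1\wedge\omega_2$ turns the system into the genuine transversely projective system. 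I would write $\omega_3 = \theta$ and note that, since $\omega_3$ restricts to $0$ on the general fibre, $\theta$ lies in the submodule of rational $1$-forms that are pulled back from $B$ along $f$ modulo $\omega_0$ --- more precisely, the obstruction $\theta$, read on the generic fibre $\mathscr X$ over $\mathbb C(B)$, is a multiple of $\beta_0$, so $\omega_3 \equiv a\,\omega_0 \pmod{f^*\Omega^1_B}$ for some rational function $a$ on $X$.

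**The mechanism.** The heart of the argument is a cohomological/compatibility computation. Taking $d$ of the third equation in (\ref{eq:TruncatedGodbillonVey}) and using the first two, one gets an integrability relation forcing $\omega_0\wedge d\omega_3 - \omega_1 \wedge (\omega_0\wedge\omega_3) = \omega_0 \wedge (\text{something})$, i.e. $\omega_3$ satisfies a linear equation modulo $\omega_0$. I would exploit this together with the fact that, because the restriction of $\F$ to a very general fibre has no rational first integral, the only "flat" data along the fibres are the trivial ones --- this is exactly where the no-rational-first-integral hypothesis enters, and it should be invoked through Lemma \ref{L:uniqueness} (uniqueness of the transversely projective structure on the generic fibre) to pin down the gauge. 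Concretely: along the generic fibre $\mathscr X$, the structure $(\beta_0,\beta_1,\beta_2)$ is unique up to equivalence because $\G$ has no rational first integral and admits no virtually transversely Euclidean structure (if it did, one would run the affine/Euclidean version of the argument instead); this rigidity lets us solve for the rational function $g$ on $X$ (not merely on $\mathscr X$) realizing the gauge change $\omega_2 \mapsto \omega_2 + g\omega_0$ that kills $\omega_3$. The affine case is identical but easier: item (4) gives $\omega_2$ and $\omega_3$ both vanishing on the general fibre, and one kills $\omega_2$ by an analogous gauge change $\omega_1 \mapsto \omega_1 - \tfrac{df}{f} + g\omega_0$, landing on the transversely affine system $d\omega_0 = \omega_0\wedge\omega_1$, $d\omega_1 = 0$.

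**The main obstacle.** The delicate step is passing from a gauge transformation defined over the function field $\mathbb C(B)$ of the base --- i.e. one that works fibrewise --- to an honest rational gauge transformation on the total space $X$. A priori the fibrewise solution $g$ might only be defined over an algebraic extension of $\mathbb C(B)$, or might have poles accumulating along a divisor dominating $B$. Overcoming this requires two ingredients: first, a \emph{uniqueness} statement (the relevant $g$ solving the linear equation modulo $\omega_0$ is unique once we demand the structure be the "standard" one, so it is Galois-invariant and hence defined over $\mathbb C(B)$ itself, not an extension); and second, a specialization/constructibility argument showing that the generic-fibre solution spreads out to a Zariski-open subset of fibres and therefore defines a rational function on $X$. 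The uniqueness is supplied precisely by the hypothesis that the restricted foliation has no rational first integral via Lemma \ref{L:uniqueness}; without it, there would be a one-parameter family of fibrewise structures and no canonical choice to glue. I would therefore organize the proof as: (i) record the integrability relation satisfied by $\omega_3$; (ii) solve it on the generic fibre using the fibrewise transversely projective structure from Lemma \ref{L:preparation}; (iii) invoke no-first-integral $+$ Lemma \ref{L:uniqueness} to see the solution is unique, hence rational over $\mathbb C(B)$, hence defines a rational function on $X$; (iv) perform the global gauge change and check that the new $\omega_3$ (resp. $\omega_2$) vanishes identically by uniqueness in \cite[Corollary 2.4]{MR2324555}; (v) conclude that $(\omega_0,\omega_1,\omega_2)$ (resp. $(\omega_0,\omega_1)$) is a transversely projective (resp. affine) structure on $X$.
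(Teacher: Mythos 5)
There is a genuine gap, and it sits exactly at the step you label ``the mechanism.'' Your plan is to kill $\omega_3$ by a gauge change $\omega_2\mapsto\omega_2+g\omega_0$, and to produce the rational function $g$ from the \emph{uniqueness} of the fibrewise projective structure via Lemma \ref{L:uniqueness}. This cannot work as stated: uniqueness of the structure on the generic fibre constrains only the restrictions $\beta_i=\omega_i|_{\mathscr X}$, whereas the obstruction to being transversely projective on $X$ lives entirely in the horizontal components of $\omega_3$ (item (3) already gives $\omega_3\wedge df_1\wedge\cdots\wedge df_q=0$, i.e. $\omega_3$ is a $\mathbb C(X)$-combination of the $df_i$ --- not ``$a\omega_0$ modulo $f^*\Omega^1_B$'' --- so the fibrewise data of $\omega_3$ is already trivial and carries no further information). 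Lemma \ref{L:uniqueness} is a statement about foliations that \emph{already} admit a transverse structure; it cannot serve as the engine of an existence proof, and you never actually derive the linear equation for $g$ or show it is solvable. Moreover the desired gauge need not exist in the form you want: in the paper's analysis of the case $\omega_3=g\,df$ with $g\neq 0$, the outcome (when no first integral appears) is that $\F$ is transversely \emph{affine} via a new pair such as $\bigl(\omega_0,\tfrac12\tfrac{dg}{g}+h\,df\bigr)$ --- the original triple is not gauged into a projective triple at all; a different, stronger structure is exhibited. Your ``main obstacle'' (descending a fibrewise gauge from $\mathbb C(B)$ to $X$) is an artifact of this approach and never arises in the actual proof, since all forms and the functions $g,h$ obtained by division are global rational objects on $X$ from the start.

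For comparison, the paper's proof is a direct differential computation, reduced recursively to the case $\dim B=1$. One writes $\omega_3=g\,df$ (resp. $\omega_2=g\,df$, resp. $\omega_1=g\,df$ in the projective/affine/Euclidean cases), differentiates the corresponding equation of the system (\ref{eq:TruncatedGodbillonVey}), and divides by $\omega_0\wedge df$. Each branch of the resulting dichotomy either exhibits the transverse structure explicitly (sometimes after substitutions like $\omega_0=g\tilde\omega_0$, $\omega_1=\tilde\omega_1+\tfrac{dg}{g}$ that reduce the projective case to the affine case and the affine case to the Euclidean one), or produces an explicit rational function ($g$ or $h$) that is a first integral of $\F|_{f=\mathrm{const}}$. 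The hypothesis that the restriction of $\F$ to the general fibre has no rational first integral is used precisely to exclude this last alternative --- not to pin down a gauge. If you want to salvage your write-up, the computation you must actually perform is the one you allude to in step (i): differentiate $d\omega_2=\omega_0\wedge\omega_3+\omega_1\wedge\omega_2$ with $\omega_3=g\,df$ and follow the consequences; the rest of your scaffolding (Galois descent, spreading out, Corollary 2.4 uniqueness) is not needed.
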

    \begin{proof}
       Assume that the dimension of $B$ is $q$ and let $f_1,\ldots, f_q$ be algebraically independent
	   rational functions in $f^* \mathbb C(B) \subset \mathbb C(X)$.  Notice that item (3) of Lemma \ref{L:preparation}
       guarantees that $\omega_3 \wedge df_1 \wedge \ldots \wedge df_q= 0$. Assume that the foliation
       $\omega_0\wedge df_1\wedge\cdots\wedge df_q$ has some transcendental
       leaf. We want to prove that $\mathcal F$ is transversely projective.
       The proof is recursive on $q$: we first prove that, on each fixed codimension
       $q-1$ subvariety defined by (an irreducible component of ) $f_2,\ldots,f_q=\text{constant}$, the restriction
       of $\mathcal F$ is transversely projective. Note that this later foliation must
       have transcendental leaves also so that we can go on with codimension $q-2$
       subvarieties defined by $f_3,\ldots,f_q=\text{constant}$. It is thus enough
       to consider the case $q=1$.

       We start with the easier case where $\mathcal F$ restricts as a transversely
       Euclidean foliation on the level sets. In other words, assume that
       $$\omega_1\wedge df=0.$$
       Then we can write $\omega_1=gdf$ for some rational function $g$
       and derivating $d\omega_0=\omega_0\wedge\omega_1$, we get
       $$\omega_0\wedge df\wedge dg=0.$$
       If $df\wedge dg=0$, then $d\omega_1=0$ so that $\mathcal F$ is transversely affine. If $df\wedge dg\not=0$,
       then $g$ is a rational first integral for $\mathcal F\vert_{f=\text{constant}}$.

       Before proceeding  to the next case, note that our assumption in the previous case is equivalent
       to $d\omega_0\wedge df=0$. Indeed, this means that $\omega_0\wedge\omega_1\wedge df=0$ and by division (recall that $\omega_0\wedge df\neq 0$) $\omega_1=gdf+h\omega_0$
       for rational functions $g$ and $h$. But (see \cite[Section 2.1]{MR2324555}) we can then replace
       $\omega_1$ by $\omega_1-h\omega_0$ without changing the equality
       $d\omega_0=\omega_0\wedge\omega_1$.

       We will treat the  affine case, now assuming
       $$\omega_2\wedge df=0.$$
       One can write $\omega_2=gdf$ as before and we get $d\omega_1=g\omega_0\wedge df$.
       If $g=0$ then $d\omega_1=0$ and $\mathcal F$ is transversely affine. If $g\not=0$,
       by derivation we get
       $$\left(\omega_1-\frac{dg}{g}\right)\wedge\omega_0\wedge df=0.$$
       Substituting $\omega_0=g\cdot\tilde\omega_0$ and $\omega_1=\tilde\omega_1+\frac{dg}{g}$, we get (the same conclusion with $g=1$)
       $$\tilde\omega_1\wedge\tilde\omega_0\wedge df=0$$
       and thus $d\tilde\omega_0\wedge df=0$ and we are back to the Euclidean case.

       We finally end with the general projective case, assuming
       $$\omega_3\wedge df=0.$$
       Writing $\omega_3=gdf$, we get $d\omega_2=g\omega_0\wedge df+\omega_1\wedge\omega_2$. If $g=0$, then $\mathcal F$ is transversely projective; if not, after derivation
       we get
       $$(\omega_1-\frac{1}{2}\frac{dg}{g})\wedge \omega_0\wedge df=0.$$
       After division, we find
       $$\omega_1=\frac{1}{2}\frac{dg}{g}+hdf+k\omega_0$$
       for rational functions $h$ and $k$. In fact, we can replace $\omega_1$ by $\omega_1-k\omega_0$ thus assuming $k=0$ (we now forget $\omega_2$).
       If $dh\wedge df=0$, then $\omega_1=\frac{1}{2}\frac{dg}{g}+hdf$ is closed and $\mathcal F$ is transversely affine. If $dh\wedge df\not=0$, then
       $$\omega_0\wedge dh\wedge df=\omega_0\wedge d\omega_1=0$$
       and $h$ is a first integral for $\mathcal F\vert_{f=\text{constant}}$.
    \end{proof}

\begin{lemma}\label{L:persistence}
	Let $\F$ be a transversely affine foliation  of codimension one on a projective manifold $X$.
	Assume there exists a projective surface $\Sigma \subset X$ for which the restriction of $\F$
    to $\Sigma$ is  virtually transversely Euclidean and admits no rational first integral then the same holds true for $\F$.
\end{lemma}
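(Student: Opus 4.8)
The plan is to reduce, via the structure theory of transversely affine foliations on projective manifolds, to a short case analysis describing how $\Sigma$ sits inside the ``source'' of $\F$.

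First I would fix a transverse affine structure $(\omega_0,\omega_1)$ for $\F$, with $\omega_1$ a closed rational $1$-form, and make a few harmless reductions. Passing to a resolution if necessary, we may assume $\Sigma$ is smooth; since $\F|_\Sigma$ is a genuine codimension one foliation, $\Sigma$ is neither $\F$-invariant nor contained in the polar locus of $\omega_0$ or $\omega_1$, so that $(\omega_0|_\Sigma,\omega_1|_\Sigma)$ is a transverse affine structure for $\F|_\Sigma$. Two observations set the stage. First, $\F$ admits no rational first integral: the restriction to $\Sigma$ of such an integral would be a rational first integral of $\F|_\Sigma$, contrary to hypothesis. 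Second, if $\F$ carried a transverse affine structure inequivalent to $(\omega_0,\omega_1)$, then Lemma~\ref{L:uniqueness}(2) would present $\F$ by a closed rational $1$-form, making it transversely Euclidean and hence finishing the proof; so we may assume $(\omega_0,\omega_1)$ is, up to equivalence, the unique transverse affine structure of $\F$.

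Next I would argue by contradiction, supposing $\F$ is not virtually transversely Euclidean. Being transversely affine, without rational first integral, and not virtually transversely Euclidean, $\F$ falls into the ``genuine'' case of the classification of transversely affine foliations on projective manifolds \cite{MR3294560}: there exist a surface $S$, a transversely affine (Riccati-type) foliation $\mathcal H$ on $S$, and a dominant rational map $F\colon X\dashrightarrow S$ with $\F=F^*\mathcal H$. I would then analyse $F|_\Sigma\colon\Sigma\dashrightarrow S$ according to the dimension of its image. If $F|_\Sigma$ is dominant, then $\F|_\Sigma=(F|_\Sigma)^*\mathcal H$, so Lemma~\ref{L:wellknown} applied to the virtually transversely Euclidean foliation $\F|_\Sigma$ forces $\mathcal H$ to be virtually transversely Euclidean, and pulling that structure back by $F$ contradicts the assumption on $\F$. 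If $F(\Sigma)$ is a point, $\Sigma$ lies in a fibre of $F$, which is tangent to $\F$ — impossible, as $\F|_\Sigma$ is codimension one. If $\overline{F(\Sigma)}$ is a curve $C\subset S$, then either $C$ is $\mathcal H$-invariant, whence $\Sigma\subset F^{-1}(C)$ is $\F$-invariant (excluded), or $C$ is generically transverse to $\mathcal H$, whence $F|_\Sigma\colon\Sigma\to C$ is nonconstant and constant on the leaves of $\F|_\Sigma$, i.e.\ a rational first integral of $\F|_\Sigma$ (excluded). All branches yield contradictions, so $\F$ is virtually transversely Euclidean.

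The step I expect to be the main obstacle — and the precise place where projectivity of $X$ is used — is the appeal to the classification \cite{MR3294560}, which fails on general compact complex manifolds; the Example preceding the Lemma is exactly such a non-Kähler situation where the conclusion breaks down, the obstruction being visible through Lieberman's theorem \cite{MR521918} on automorphisms preserving a Kähler class. A self-contained argument would instead work directly with the closed $1$-form $\omega_1$: Lemma~\ref{L:uniqueness} first shows that $\omega_1|_\Sigma$ is, up to equivalence on $\Sigma$, logarithmic with periods in $2\pi\sqrt{-1}\,\mathbb Q$ (so the residues of $\omega_1$ along the polar components meeting $\Sigma$ are rational), after which one must rule out the ``vertical'' contributions of $\omega_1$ invisible to $\Sigma$ — and here compactness enters through the Hodge-theoretic fact that a nonzero holomorphic $1$-form on a projective manifold cannot have all its periods commensurable to $2\pi\sqrt{-1}$; controlling the possible higher-order poles along vertical divisors and their residues is the hard part of this alternative route.
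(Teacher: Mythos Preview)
Your proposal is correct and follows essentially the same route as the paper: argue by contradiction, invoke the structure theorem for transversely affine foliations \cite{MR3294560} to present $\F$ as the pull-back of a Riccati foliation $\mathcal R$ on a surface $S$, observe that the restriction $\Sigma\dashrightarrow S$ must be dominant (else $\F|_\Sigma$ has a rational first integral or $\Sigma$ is $\F$-invariant), and then apply Lemma~\ref{L:wellknown} to push the virtually Euclidean structure down to $\mathcal R$ and back up to $\F$. Your case analysis on $\dim\overline{F(\Sigma)}$ is more explicit than the paper's one-line dismissal of the non-dominant case, and your preliminary reduction via Lemma~\ref{L:uniqueness}(2) is extra but harmless; the core argument is the same.
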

\begin{proof}
	This is a  consequence of the structure theorem for transversely affine foliations, see
	\cite{MR3294560}   and  \cite[Theorem D]{MR3522824}, which says that transversely affine foliations on projective manifolds which are
    not virtually transversely Euclidean are pull-backs of Riccati foliations on surfaces. Aiming at a contradiction,
    assume that $\F$ is transversely affine but not transversely Euclidean. Thus there exists a rational
	map $\varphi : X \dashrightarrow  S$ to a ruled surface, and a Riccati foliation on $S$ such that $\varphi^* \mathcal R = \F$.
    The restriction of $\varphi$ to $\Sigma$ must be dominant as otherwise $\F_{|\Sigma}$ would
	admit a rational first integral. Thus, if we assume that $\F_{|\Sigma}$ is virtually transversely
    Euclidean then  Lemma \ref{L:wellknown} implies that the same holds true for $\mathcal R$.
    But this would imply that $\F= \varphi^* \mathcal R$ is also virtually transversely Euclidean contrary to our assumptions.
    This gives the sought contradiction which proves the lemma.
\end{proof}


\section{Tangential foliation}\label{S:tangential}

This section is devoted to the proof of Theorem \ref{THM:iff}. We start by recalling the definition of the tangential foliation. For a more detailed account, we invite the reader to consult
\cite[Section 6]{2011arXiv1107.1538L}.

\subsection{Tangential foliation}

Let $X$ be a uniruled projective manifold. These are characterized by the existence of  non-constant morphisms $f : \mathbb P^1 \to X$ such that $f^* \TF$ is generated by global sections, the so called free morphisms. At a neighborhood of a free morphism the scheme $\Mor(\mathbb P^1, X)$ is smooth and the tangent space of $\Mor(\mathbb P^1 ,X)$ at the  point $f$ is naturally isomorphic to $H^0(\mathbb P^1,f^* T_X)$.

Let us fix a foliation $\F$ on  $X$ and  an irreducible component $M \subset \Mor(\mathbb P^1,X)$ of the space of morphism from $\mathbb P^1$ to $X$ containing a free morphism $f$.
Recall from \cite[Section 6]{2011arXiv1107.1538L} that $\F$ induces a natural foliation on $M$, called the tangential foliation of $\F$ and denoted by $\Ftang$,
with tangent space at a general free morphism $f$ given by $H^0(\mathbb P^1, \TF) \subset H^0(\mathbb P^1, T_X)$.

The tangential foliation admits the following alternative description. If
$\ev : M \times \mathbb P^1 \to X$ is the evaluation morphism, $p : M \times \mathbb P^1 \to M$ is the natural projection, and
$\mathcal H$ is  the  foliation on $M \times \mathbb P^1$  defined by the projection to $\mathbb P^1$ then
 $\Ftang$ coincides with  $p_* (\mathcal H \cap \ev^* \F)$.

\subsection{Foliations on $B\times \mathbb P^1$}
Let $L$ be a leaf of $\Ftang$ and $\overline L$ its Zariski closure.
We recall below  \cite[Theorem 6.5]{2011arXiv1107.1538L} which describes  the  foliation $\G= \ev^* \F_{|\overline L \times \mathbb P^1}.$

\begin{thm}\label{T:6.5}
Let $B$ be an algebraic manifold, let $\mathcal G$ be a codimension one foliation on $B \times \mathbb P^1$,
let $\pi: B \times \mathbb P^1 \to B$ be the natural projection, and let $\mathcal H$ be the codimension one foliation defined by
the fibers of the other natural projection $\rho:  B  \times \mathbb P^1  \to \mathbb P^1$.
If the general fiber of $\pi$ is generically transverse to $\mathcal G$ and the general leaf of the direct image $\mathcal T= \pi_*(\mathcal G \cap \mathcal H)$
is Zariski dense  then the codimension of $\mathcal T$ is at most three. Moreover,
\begin{enumerate}
    \item If $\codim \mathcal T =1$ then $\mathcal G$ is defined by a closed rational $1$-form;
    \item If $\codim \mathcal T=2$ then $\mathcal G$ is transversely affine;
    \item If $\codim \mathcal T=3$ then $\mathcal G$ is transversely projective.
\end{enumerate}
\end{thm}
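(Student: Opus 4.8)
The plan is to reduce the statement to a question about rational $1$-forms and the way $\mathcal G$ depends on the $\mathbb P^1$--coordinate. Since $\mathcal G$ is generically transverse to the fibres of $\pi$, one may fix an affine coordinate $z$ on $\mathbb P^1$ and write $\mathcal G$, over a Zariski open subset of $B$, as the zero set of a rational $1$-form $\omega_0=dz+\beta$ whose relative part $\beta$ carries no $dz$--term. For $t\in\mathbb P^1$ let $\beta(t)$ denote the restriction of $\beta$ to $B\times\{t\}$, a rational $1$-form on $B$ varying rationally with $t$. By construction a leaf $L$ of $\pi_*(\mathcal G\cap\mathcal H)$ is characterised by the requirement that $\mathcal G$ restrict to $L\times\mathbb P^1$ as the horizontal foliation $\mathcal H$, that is, by $\beta(t)|_{TL}=0$ for every $t$; hence $\mathcal T$ is the foliation on $B$ cut out by the $\mathbb C$--vector space $V:=\mathrm{span}\{\beta(t):t\in\mathbb P^1\}$, and
\[
\codim\mathcal T\;=\;r\;:=\;\dim_{\mathbb C}V .
\]
A direct computation identifies the integrability condition $\omega_0\wedge d\omega_0=0$ with the relative identity $d_B\beta=\beta\wedge\partial_z\beta$, where $d_B$ is exterior differentiation along $B$; this identity stays at the centre of the argument.

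First I would prove $r\le 3$. Write $\beta=\sum_{i=1}^{r}\phi_i(z)\,\eta_i$ with $\eta_1,\dots,\eta_r$ a basis of $V$ and $\phi_1,\dots,\phi_r\in\mathbb C(z)$ linearly independent. Inserting this into $d_B\beta=\beta\wedge\partial_z\beta$ and comparing, for each $i<j$, the coefficient of $\eta_i\wedge\eta_j$, one finds that every Wronskian $W(\phi_i,\phi_j)=\phi_i\phi_j'-\phi_j\phi_i'$ lies in $\mathrm{span}\{\phi_1,\dots,\phi_r\}$ --- a further contribution could only be absorbed by a linear relation among the $\eta_i$, which is impossible. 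Thus $\mathrm{span}\{\phi_1,\dots,\phi_r\}$ is a finite--dimensional $\mathbb C$--subspace of $\mathbb C(z)$ stable under taking Wronskians, and any such space has dimension at most $3$; this is precisely where the equality $\dim\mathrm{PGL}_2(\mathbb C)=3$ enters. Concretely, the forms $\omega_k:=\partial_z^k\beta$ satisfy $d\omega_0=\omega_0\wedge\omega_1$, $d\omega_1=\omega_0\wedge\omega_2$ and $d\omega_2=\omega_1\wedge\omega_2+\omega_0\wedge\omega_3$, so the content of the bound is that the Godbillon--Vey obstruction $\omega_3=\partial_z^3\beta$ can be removed.

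It then remains to identify the transverse structure in the three cases. When $r=1$, say $\beta=\phi_1(z)\,\eta_1$, the identity forces $d\eta_1=0$ and $\omega_0/\phi_1(z)=dz/\phi_1(z)+\eta_1$ is a closed rational $1$-form defining $\mathcal G$. When $r=2$ or $3$, dividing $\omega_0$ by a suitable power of a fundamental solution of the second--order linear equation attached to $\mathrm{span}\{\phi_i\}$ (as its solution space when $r=2$, as the symmetric square of its solution space when $r=3$) puts $\omega_0$, up to a rational factor, in the form $du+\eta_1+u\,\eta_2$, resp.\ $du+\eta_1+u\,\eta_2+u^2\,\eta_3$, where $u$ is a rational function on $B\times\mathbb P^1$ and the $\eta_i$ are rational $1$-forms on $B$; that equation has trivial projective monodromy precisely because $\mathrm{span}\{\phi_i\}\subset\mathbb C(z)$, which is what keeps $u$ and the $\eta_i$ rational. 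Feeding these forms back into $d_B\beta=\beta\wedge\partial_z\beta$ and separating powers of $u$ produces exactly the relations one wants: $d\eta_2=0$ and $d\eta_1=\eta_1\wedge\eta_2$ when $r=2$, so that $\omega_0$ together with the closed form $\eta_2$ is a transverse affine structure; and $d\eta_1=\eta_1\wedge\eta_2$, $d\eta_2=2\,\eta_1\wedge\eta_3$, $d\eta_3=\eta_2\wedge\eta_3$ when $r=3$, which make $(\omega_0,\ \eta_2+2u\,\eta_3,\ 2\,\eta_3)$ a transverse projective structure. Any residual multivaluedness is cleared away by the descent argument already used in the proof of Lemma~\ref{L:preparation}, and the Zariski density hypothesis serves only as a safeguard, ruling out a rational first integral for $\mathcal G$.

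The hard part will be the middle step: squeezing out of the single relative identity $d_B\beta=\beta\wedge\partial_z\beta$ both the Wronskian stability that caps $r$ at $3$ and the explicit reduction to the normal forms above, while carrying the bookkeeping out carefully enough that everything is defined not merely on a fibre or a Zariski open subset but by genuinely rational data on all of $B\times\mathbb P^1$ --- which demands attention along the invariant fibres of $\pi$ and along the tangency locus of $\mathcal G$ with the fibration.
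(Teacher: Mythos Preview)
Your sketch is essentially correct and matches the approach of the original proof. Note first that the present paper does \emph{not} prove this theorem: it is quoted from \cite[Theorem~6.5]{2011arXiv1107.1538L}, and Remark~\ref{R:6.5bis} summarises that proof. What the remark records---that $\mathcal G$ is defined by $\omega=dz+a(z)\alpha+b(z)\beta+c(z)\gamma$ with $\codim\mathcal T$ equal to the dimension of $\mathrm{span}\{\alpha,\beta,\gamma\}$, and that for $\codim\mathcal T\ge 2$ one has $\mathcal G=(\id_B\times\varphi)^*\mathcal R$ for a Riccati foliation $\mathcal R$---is exactly your decomposition $\beta=\sum\phi_i(z)\eta_i$ together with your normalisation via the second--order linear equation attached to $\mathrm{span}\{\phi_i\}$. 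Your identification of the bound $r\le 3$ with Lie's classification of finite--dimensional Lie algebras of vector fields on the line (through the isomorphism $f\mapsto f\partial_z$ sending the Wronskian to the bracket) is the right conceptual explanation and is implicit in the original argument.

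One technical point to tighten: from $\sum_i\phi_i\,d_B\eta_i=\sum_{i<j}W(\phi_i,\phi_j)\,\eta_i\wedge\eta_j$ you infer $W(\phi_i,\phi_j)\in\mathrm{span}\{\phi_k\}$ by ``comparing coefficients of $\eta_i\wedge\eta_j$''. This step uses linear independence of the $2$--forms $\eta_i\wedge\eta_j$, which does not follow from the linear independence of the $\eta_i$ alone (e.g.\ when $\dim B$ is small). The clean fix is the one you already hint at with the Godbillon--Vey sequence: since $\omega_k=\partial_z^k\beta$ and the $\phi_i$ are linearly independent, for generic $z_0$ the forms $\partial_z\beta(z_0),\dots,\partial_z^{\,r}\beta(z_0)$ span $V$; so the Godbillon--Vey relations at $z_0$ already exhibit $d_B\eta_i$ and the bracket closure directly, without appealing to independence of the $\eta_i\wedge\eta_j$. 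With that adjustment your sketch is a faithful outline of the cited proof.
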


\begin{remark}\label{R:6.5bis}
The proof presented in \cite{2011arXiv1107.1538L}  gives more information as we now proceed to recall. It starts by
showing that  the foliation $\G$ is defined by a rational $1$-form
which can be written as
\[
    \omega =  dz + a(z)  \alpha + b(z)  \beta + c(z)  \gamma
\]
where $\alpha, \beta, \gamma$ are rational $1$-forms on $B$; $a, b, c$ are rational functions on $\mathbb P^1$; and $z$ is a coordinate on $\mathbb P^1$.
To avoid overburdening the notation, we identify $1$-forms on $B$ with their pull-backs through $\pi$ to $B \times \mathbb P^1$,  and we also identify $1$-forms
on $\mathbb P^1$ with their pull-backs through $\rho : B \times \mathbb P^1 \to \mathbb P^1$ .
Furthermore, the  codimension of $\mathcal T$ coincides with the dimension of the $\mathbb C$-vector space generated by $\alpha, \beta, \gamma$.

When $\codim \mathcal T = 1$ then the foliation $\G$ is induced by the sum of pull-backs under the natural projections of closed rational $1$-form on $B$ and on $\mathbb P^1$.
To wit, $\G$ is defined by  $ \frac{dz}{f(z)} + \eta$, where $f$ is rational function on $\mathbb P^1$ and $\eta$ is closed
rational $1$-form on $B$.  Notice that  the zero set of this  closed rational $1$-form $\omega$ has no divisorial components.

If instead  $\codim \mathcal T \ge 2$ then the proof of  \cite[Theorem 6.5]{2011arXiv1107.1538L} establishes  the existence of a rational map
$\varphi : \mathbb P^1 \to \mathbb P^1$ and a Riccati foliation $\mathcal R$ on the manifold $B \times \mathbb P^1$ such that $\G = (\id_B \times \varphi)^* \mathcal R$.
The case where $\codim \mathcal T=3$ is distinguished from the case where $\codim \mathcal T=2$ by the existence of fibers of the projection
$\rho:B \times \mathbb P^1 \to \mathbb P^1$ invariant by $\G$. When $\codim \mathcal T=3$ none of the $\rho$-fibers are invariant. When $\codim \mathcal T=2$ the
Riccati foliation $\mathcal R$ has exactly one invariant $\rho$-fiber and, consequently, the  foliation $\G$ has at least one invariant $\rho$-fiber.
\end{remark}

We proceed to investigate the converse of Theorem \ref{T:6.5}. We start with a simple observation.

\begin{lemma}\label{L:horizontal}
    Notation and assumptions as in Theorem \ref{T:6.5}.
    If $Y$ is an algebraic leaf of $\G$ generically transverse to the fibers of $\pi$  then $Y$ is contained in a fiber of $\rho$.
\end{lemma}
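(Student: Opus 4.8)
The plan is to argue by contradiction. Suppose $Y$ is \emph{not} contained in any fiber of $\rho$, so that $\rho|_Y\colon Y\to\mathbb P^1$ is a non-constant morphism. Since $Y$ is a leaf of the codimension one foliation $\G$ we have $\dim Y=\dim B$, and the hypothesis that $Y$ is generically transverse to the fibers of $\pi$ says precisely that $\pi|_Y\colon Y\to B$ is unramified at a general point of $Y$, hence (source and target having the same dimension) dominant and generically finite. We may also assume that $\G\neq\mathcal H$ as foliations, for otherwise $Y$ is a leaf of $\mathcal H$, i.e.\ a fiber of $\rho$, and there is nothing to prove. The idea is to extract from $Y$ a one-parameter family of proper algebraic $\mathcal T$-invariant subvarieties of $B$ that covers a dense subset of $B$, in contradiction with the standing hypothesis of Theorem \ref{T:6.5} that the general leaf of $\mathcal T$ is Zariski dense.

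For a general $t\in\mathbb P^1$ set $Y_t:=Y\cap\rho^{-1}(t)$; being a general fiber of the non-constant map $\rho|_Y$, it has pure dimension $\dim B-1$. The key local point is that at a general point $p$ of any component $C$ of $Y_t$ the following hold simultaneously: $p$ is a smooth point of $Y$ lying outside the singular sets of $\G$ and of $\mathcal H$, the foliations $\G$ and $\mathcal H$ are transverse at $p$, and $\rho|_Y$ is a submersion at $p$, so that $Y$ meets the leaf $\rho^{-1}(t)$ of $\mathcal H$ transversely there. Consequently $T_pC=T_pY\cap T_p\rho^{-1}(t)$ coincides with the tangent space at $p$ of the foliation $\G\cap\mathcal H$; since $\dim C=\dim B-1$ is exactly the dimension of the leaves of $\G\cap\mathcal H$, each component of $Y_t$ is invariant by $\G\cap\mathcal H$.

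Now push everything forward by $\pi$. As $\pi$ is generically finite on $Y$, it is generically finite on each component of $Y_t$; and, by the very definition of the direct image, it maps leaves of $\G\cap\mathcal H$ into leaves of $\mathcal T=\pi_*(\G\cap\mathcal H)$. Hence $W_t:=\overline{\pi(Y_t)}$ is an algebraic subvariety of $B$, invariant by $\mathcal T$, of dimension $\dim B-1$; in particular $W_t\subsetneq B$. To conclude, observe that the subset of $Y$ lying over the finitely many non-general values of $\rho|_Y$, together with the critical locus of $\rho|_Y$, is a proper closed subset of $Y$ (here again we use that $\rho|_Y$ is non-constant), so its image under the generically finite map $\pi|_Y$ is a proper closed subset of $B$. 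A general point $b\in B$ therefore admits a preimage $y\in Y$ with $t:=\rho(y)$ a general value, whence $b\in W_t$. Thus the leaf of $\mathcal T$ through a general point of $B$ is contained in the proper $\mathcal T$-invariant subvariety $W_t$ and cannot be Zariski dense — the desired contradiction.

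The step requiring the most care is the middle one: checking that for a \emph{general} pair $(t,p)$ one may simultaneously avoid the singular sets of $\G$, of $\mathcal H$ and of $\G\cap\mathcal H$, the singular locus of $Y$, and the critical points of $\rho|_Y$, so that the identity $T_pC=T_p(\G\cap\mathcal H)$ holds literally and $W_t$ is genuinely $\mathcal T$-invariant rather than merely "invariant away from a bad set". Everything else is formal. (Alternatively, the non-constancy of $\rho|_Y$ could be ruled out by a direct computation, restricting to $Y$ the normal form $\omega=dz+a(z)\alpha+b(z)\beta+c(z)\gamma$ supplied by Remark \ref{R:6.5bis}; the argument above has the merit of being coordinate-free.)
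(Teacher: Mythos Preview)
Your argument is correct and self-contained, but it follows a different route from the paper's.

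The paper's proof is much shorter and exploits directly the existence of a single Zariski-dense leaf. In the paper's intended setting one has $B=\overline L$ for a leaf $L$ of $\Ftang$ (equivalently, a general leaf of $\mathcal T$), and by construction each slice $L\times\{z\}$ is tangent to $\G$; hence the restriction of $\G$ to $L\times\mathbb P^1$ coincides with $\mathcal H$ there, so $Y\cap(L\times\mathbb P^1)$ is a union of horizontal slices, and Zariski density of $L$ in $B$ forces $Y$ itself to be horizontal. In the abstract setting of Theorem~\ref{T:6.5} the same works once one checks, e.g.\ from the normal form in Remark~\ref{R:6.5bis}, that $L\times\{z\}$ is $\G$-invariant for any leaf $L$ of $\mathcal T$.

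Your approach instead slices $Y$ by the $\rho$-fibers and pushes forward by $\pi$ to manufacture a one-parameter family $\{W_t\}$ of proper $\mathcal T$-invariant hypersurfaces covering $B$, contradicting Zariski density of the general $\mathcal T$-leaf. This is longer but has the advantage of being entirely intrinsic to the hypotheses of Theorem~\ref{T:6.5}: you never invoke $\Ftang$ or the normal form, only the transversality of $\G$ and $\mathcal H$ along $Y$ and the definition of $\mathcal T$ as a direct image. The care you take with the ``general $(t,p)$'' step is warranted and your handling of it is fine; the dimension count showing that $\pi|_Y$ sends the bad locus into a proper closed subset of $B$ is the right way to justify the final covering claim.
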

\begin{proof}
	By the definition of $\Ftang$ the restriction of $Y$ to $L \times \mathbb P^1$ is horizontal.
    Therefore the { restriction to $Y \cap (L\times \mathbb P^1)$} of the projection $\overline L \times \mathbb P^1 \to \mathbb P^1$ is locally constant.
    Since $L$ is Zariski dense { $\overline L=B$} the same holds true over $Y$ itself. Therefore $Y$ is horizontal.
\end{proof}

\begin{prop}\label{P:transEucl}
Notation and assumptions as in Theorem \ref{T:6.5}.
If $\G$ is virtually transversely Euclidean then $\codim \mathcal T=1$.
\end{prop}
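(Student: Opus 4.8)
The plan is to argue by contradiction: suppose $\G$ is virtually transversely Euclidean but $\codim \mathcal{T} \ge 2$. By Remark \ref{R:6.5bis}, in this range there is a rational map $\varphi : \mathbb{P}^1 \to \mathbb{P}^1$ and a Riccati foliation $\mathcal{R}$ on $B \times \mathbb{P}^1$ with $\G = (\id_B \times \varphi)^* \mathcal{R}$, and $\mathcal{R}$ has at most one invariant $\rho$-fiber. First I would note that since $\id_B \times \varphi$ is a dominant rational map with $\G = (\id_B\times\varphi)^*\mathcal R$, Lemma \ref{L:wellknown} gives that $\mathcal R$ is itself virtually transversely Euclidean. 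So it suffices to derive a contradiction from the assumption that a Riccati foliation $\mathcal{R}$ on $B \times \mathbb{P}^1$ with at most one invariant section (more precisely, at most one invariant $\rho$-fiber, i.e.\ one invariant fiber of the projection to $\mathbb P^1$) is virtually transversely Euclidean.

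The key point is to analyze what a virtually transversely Euclidean structure forces on the monodromy/holonomy of a Riccati foliation. A Riccati foliation on $B\times\mathbb P^1$ corresponds to a flat $\mathrm{PGL}_2$-connection on the trivial $\mathbb P^1$-bundle over (an open subset of) $B$, and being virtually transversely Euclidean means, after a finite covering, the foliation is given by a closed rational $1$-form; equivalently the transverse holonomy, which a priori lands in $\mathrm{PGL}_2(\mathbb C)$, is conjugate into the translation-and-scaling subgroup and moreover has image in a group commensurable to translations only — so that the projectivized monodromy representation fixes a point of $\mathbb P^1$ (the point at infinity for the Euclidean structure) and acts through a finite-by-unipotent (in fact finite-by-translation) group. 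A representation into $\mathrm{PGL}_2(\mathbb C)$ fixing a point of $\mathbb P^1$ has that fixed point giving an invariant section of the $\mathbb P^1$-bundle, hence an invariant $\rho$-fiber — but this is exactly the kind of rigidity we want: once the holonomy fixes a point, one gets at least one invariant $\rho$-fiber (consistent with $\codim\mathcal T=2$), and the virtually Euclidean condition (periods commensurable to $2\pi i$, no genuine scaling part up to finite cover) should force a \emph{second} invariant $\rho$-fiber or collapse the foliation to one with a rational first integral — contradicting the description in Remark \ref{R:6.5bis} that for $\codim\mathcal T=2$ there is exactly one invariant $\rho$-fiber and the leaf $L$ is non-algebraic.

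Concretely, I would work directly with the normal form $\omega = dz + a(z)\alpha + b(z)\beta + c(z)\gamma$ from Remark \ref{R:6.5bis}, where the span of $\alpha,\beta,\gamma$ has dimension $\codim\mathcal T\ge 2$, and use that $\G$ virtually transversely Euclidean means there is a logarithmic $1$-form $\theta$ with periods commensurable to $2\pi i$ and $d\omega = \theta\wedge\omega$. Expanding $d\omega$ (the fiber directions contribute $a'(z)\,dz\wedge\alpha$, etc., and the base directions contribute $a(z)\,d\alpha$, etc.), and writing $\theta = \theta_B + \lambda(z,\cdot)\,dz$ with $\theta_B$ a form pulled back from $B$, the equation $d\omega = \theta\wedge\omega$ splits into a $dz\wedge(\text{base})$ part and a pure-base part. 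The $dz$-part forces a linear relation showing $a'(z),b'(z),c'(z)$ lie in the span of $1,a(z),b(z),c(z)$ against the coefficient form, and pinning this down shows the transverse structure is in fact affine with the ``scaling'' one-form $\omega_1$ equal to $\theta$; the Euclidean (periods in $2\pi i\mathbb Z$, up to finite cover) constraint then forces $\theta$ to be \emph{exact} along the leaves in a way incompatible with $\dim\langle\alpha,\beta,\gamma\rangle\ge 2$ unless $L$ is algebraic. The main obstacle, I expect, is bookkeeping the split of the equation $d\omega=\theta\wedge\omega$ cleanly enough to extract that the non-closedness of $\omega$ is ``one-dimensional'' (affine) whereas $\codim\mathcal T\ge2$ demands genuinely more transverse directions; once that incompatibility is isolated, invoking Lemma \ref{L:uniqueness}(3) (two non-equivalent virtually Euclidean structures force a rational first integral, contradicting non-algebraicity of $L$) or the exactly-one-invariant-fiber statement of Remark \ref{R:6.5bis} closes the argument.
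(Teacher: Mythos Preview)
Your overall strategy matches the paper's: argue by contradiction, pass from $\G$ to the Riccati model $\mathcal R$ via Remark \ref{R:6.5bis}, and use Lemma \ref{L:wellknown} to transfer the virtually transversely Euclidean structure to $\mathcal R$. That part is fine.

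The gap is in how you close the argument. Neither of your two proposed endgames actually works as stated. First, the monodromy heuristic: a virtually Euclidean structure gives holonomy in a finite extension of the translation group, hence a fixed point on $\mathbb P^1$ and one invariant $\rho$-fiber --- but there is no reason this produces a \emph{second} invariant fiber, so this does not by itself contradict the $\codim\mathcal T=2$ description in Remark \ref{R:6.5bis}. Second, your appeal to Lemma \ref{L:uniqueness}(3) requires two non-equivalent \emph{virtually Euclidean} structures, but the natural affine structure coming from the Riccati form (once you have normalized the invariant section to $z=\infty$, so $\mathcal R$ is defined by $\omega=dz+\alpha+z\beta$ and $(\omega,\beta)$ is affine) is only affine, not a priori virtually Euclidean. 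So Lemma \ref{L:uniqueness}(3) does not apply; at best Lemma \ref{L:uniqueness}(2) gives that $\mathcal R$ is defined by a closed rational $1$-form, and this alone does not force $\codim\mathcal T\le 1$.

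What the paper does, and what your sketch is missing, is a concrete computation after this point. One first produces the invariant $\rho$-fiber not via monodromy but by observing that the canonical multi-valued first integral must be singular along a hypersurface dominating $B$, which is $\mathcal R$-invariant, and then applying Lemma \ref{L:horizontal} to force it to be a $\rho$-fiber; this gives $\gamma=0$. Then one compares the two affine structures $(\omega,\beta)$ and $(\omega,\omega_1)$. If $\beta=\omega_1$ then $\beta$ is logarithmic with periods commensurable to $\sqrt{-1}\pi$, so $\exp(n\int\beta)$ is a rational first integral for $\mathcal T$, contradicting Zariski density of the general leaf. If $\beta\neq\omega_1$, their difference is a non-zero closed form $h\omega$; Lemma \ref{L:horizontal} applied to the divisor of $h$ forces $h=a(b)\cdot b(z)$ with $a\in\mathbb C(B)$, $b\in\mathbb C(\mathbb P^1)$, and a short direct computation (expanding $d(h\omega)=0$ and wedging with $\beta$, then with $\alpha$) shows first that $a$ is constant and then that $\alpha\wedge\beta=0$, hence $\codim\mathcal T\le 1$. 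This last explicit step is the missing idea in your proposal.
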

\begin{proof}
  Aiming at a contradiction, let us assume that $\G$ is virtually transversely Euclidean but $\delta= \codim \mathcal T \ge 2$. According to Remark \ref{R:6.5bis} we can assume that $\G$ is the
  pull-back of a Riccati foliation $\mathcal R$ on $B \times \mathbb P^1$ through a rational map of the form $\id_B \times \varphi : B \times \mathbb P^1 \dashrightarrow B \times \mathbb P^1$. In particular, $\mathcal R$ is defined by a rational $1$-form which can be written as
  \[
    \omega =  dz +   \alpha + z \beta + z^2  \gamma
    \]
  where $\alpha, \beta, \gamma$ are rational $1$-forms on $B$ which define the foliation $\mathcal T$ and $z$ is a coordinate on $\mathbb P^1$.   The integrability condition $\omega \wedge d \omega=0$ implies that
  \begin{align*}
    d \alpha &= \alpha \wedge \beta \\
    d \beta  &= 2\alpha \wedge \gamma \\
    d \gamma &= \beta \wedge \gamma \, .
  \end{align*}
  In particular, the assumption $\codim \mathcal T\ge2$ implies $d \omega\neq 0$.

  Since we are assuming that $\mathcal G$ is virtually transversely Euclidean, the same holds true for $\mathcal R$ according to Lemma \ref{L:wellknown}. Therefore there exists a non-zero closed logarithmic $1$-form $\omega_1$ on $B \times \mathbb P^1$ with periods commensurable to $\sqrt{-1} \pi$ such that $d \omega = \omega \wedge \omega_1$. Since any holomorphic function on $\mathbb P^1$ is constant, the {\it canonical} multi-valued first integral
  \[
    F = \int \left(\exp\int \omega_1\right) \omega_0
  \]
  must have a hypersurface contained in its singular set dominating the basis of the fibration $\pi : B \times \mathbb P^1 \to B$. As this hypersurface is clearly invariant by $\mathcal R$, we can apply Lemma \ref{L:horizontal} to guarantee that, after a change of coordinates on $\mathbb P^1$, the section $\{ z= \infty\}$ is invariant by $\mathcal R$. In these new coordinates, $\gamma =0$, i.e. $\mathcal R$ is defined by $\omega = dz + \alpha + z \beta$. This already shows that $\codim \mathcal T \le 2$, since
  $\mathcal T$ is the foliation defined by $\alpha$ and $\beta$.

   The integrability condition implies that $(\omega, \beta)$ is a transverse affine structure for $\mathcal R$. Therefore both $(\omega, \beta)$ and $(\omega, \omega_1)$ are transverse structures for $\mathcal R$. If they are the same then  $\omega_1=\beta$ and
  $\exp( n \int \beta)$ is a rational function constant along the leaves of $\mathcal T$ for any sufficiently divisible integer $n$ ($\omega_1$ is logarithmic with periods commensurable to $\sqrt{-1} \pi$). As we are assuming that the leaves of $\mathcal T$ are Zariski dense, we deduce that $\beta =0$ and, consequently, $\codim \mathcal T=1$ contrary to our assumptions.

  We can assume that  $\omega_1$ and $\beta$ are distinct closed rational $1$-forms. Their difference is a non-zero closed rational $1$-form defining $\mathcal R$. Therefore we can write $\beta - \omega_1 = h \omega$ for a certain non-constant rational $h \in \mathbb C(B \times \mathbb P^1)$. The irreducible components of the  divisor of zeros and poles of $h$, according to Lemma \ref{L:horizontal}, are either fibers of $\pi: B \times \mathbb P^1 \to B$ or fibers of $\rho : B\times \mathbb P^1 \to \mathbb P^1$. Therefore we can write $h$ as a product of $a \in \mathbb C(B)$ with $b \in \mathbb C(\mathbb P^1)$.

  We claim that the function $a \in \mathbb C(B)$  is constant. Indeed, from the equalities $d(h \omega) =0$ and
  $dz \wedge db =0$  we deduce that
  \[
    dz \wedge d ( a \omega) =0  \implies dz \wedge da \wedge \omega + a dz \wedge d\omega = 0 \, .
  \]
  Using that $d\beta=0$ we can writing
  \[
    dz \wedge da \wedge(\alpha + z \beta ) + a dz \wedge d \alpha =0\, .
  \]
  Finally, taking the wedge product of this last identity with $\beta$ we conclude that $dz \wedge da \wedge \alpha \wedge \beta =0$. It follows that $a$ is a first integral for $\mathcal T$, and as such must be constant.

  Now from $d (b\omega)= 0$ for $b \in \mathbb C(\mathbb P^1)$, we deduce the identity $db \wedge \alpha + b d \alpha + d(zb) \wedge\beta +zhd\beta =0$. After taking the wedge product with $\alpha$ and using the vanishing of $d \beta$ and of $\alpha \wedge d\alpha$, we conclude that $\alpha\wedge \beta=0$. This implies $\codim \mathcal T \le 1$. The proposition follows.
\end{proof}

Arguing as in the beginning of the proof above, one also obtains  the following result.

\begin{cor}\label{C:transAff}
Notation and assumptions as in Theorem \ref{T:6.5}.
If $\G$ is  transversely affine then $\codim \mathcal T\le2$.
\end{cor}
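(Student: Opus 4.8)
The plan is to rule out the extremal value $\codim\mathcal{T} = 3$ — the cases $\codim\mathcal{T}\le 2$ being nothing to prove and $\codim\mathcal{T}\le 3$ being Theorem \ref{T:6.5} — by rerunning the first half of the proof of Proposition \ref{P:transEucl}, with the virtually transversely Euclidean hypothesis replaced by the weaker assumption that $\G$ is transversely affine. So I would assume $\codim\mathcal{T} = 3$ and, invoking Remark \ref{R:6.5bis} (which only needs $\codim\mathcal{T}\ge 2$), write $\G = (\id_B\times\varphi)^*\mathcal{R}$ for a rational map $\varphi:\mathbb{P}^1\dashrightarrow\mathbb{P}^1$ and a Riccati foliation $\mathcal{R}$ on $B\times\mathbb{P}^1$ defined by $\omega = dz + \alpha + z\beta + z^2\gamma$, where $\alpha,\beta,\gamma$ are rational $1$-forms on $B$ generating $\mathcal{T}$ and subject to the integrability relations recorded in that proof. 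Since $\codim\mathcal{T}$ equals $\dim_\mathbb{C}\langle\alpha,\beta,\gamma\rangle$, the assumption $\codim\mathcal{T} = 3$ means $\alpha,\beta,\gamma$ are linearly independent, in particular $\gamma\neq 0$.

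Next I would use Lemma \ref{L:wellknown} to see that $\mathcal{R}$ is still transversely affine, so there is a closed rational $1$-form $\omega_1$ with $d\omega = \omega\wedge\omega_1$ and an associated canonical multivalued first integral $F = \int(\exp\int\omega_1)\,\omega$ regular off an $\mathcal{R}$-invariant hypersurface $\Delta$. The heart of the argument is that $\Delta$ must have a component dominating $B$: otherwise a general fibre $\{b\}\times\mathbb{P}^1$ of $\pi$ would avoid $\Delta$, and there $\alpha,\beta,\gamma$ vanish, so $\omega$ restricts to $dz$, forcing $F$ to restrict to a single-valued holomorphic — hence constant — function on $\mathbb{P}^1$ whose differential is the nonzero form $(\exp\int\omega_1)\,dz$, which is absurd. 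Such a dominating component $H$ is $\mathcal{R}$-invariant and generically transverse to the fibres of $\pi$, so Lemma \ref{L:horizontal} identifies it with a fibre of $\rho$. Moving $H$ to $\{z = \infty\}$ by a M\"obius change of the coordinate $z$ — which preserves the Riccati form and replaces $\alpha,\beta,\gamma$ by a triple spanning the same $\mathbb{C}$-vector space — the invariance of $\{z=\infty\}$ forces the $z^2$-coefficient to vanish, i.e. the new $\gamma$ is zero; hence $\mathcal{T}$ is generated by two $1$-forms and $\codim\mathcal{T}\le 2$, the desired contradiction.

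The step I expect to need the most care, exactly as in the proof of Proposition \ref{P:transEucl}, is the assertion that the singular locus $\Delta$ of the canonical first integral of a transversely affine foliation is a union of invariant hypersurfaces — equivalently, that the defining $1$-form $\omega$ can be chosen with polar divisor supported on invariant hypersurfaces — since this is precisely what legitimizes applying Lemma \ref{L:horizontal} to the component $H$. I would also stress that nowhere in this argument are the periods of $\omega_1$ assumed commensurable to $\sqrt{-1}\pi$; that extra hypothesis is what lets Proposition \ref{P:transEucl} push the bound down to $\codim\mathcal{T}\le 1$, whereas for a merely transversely affine structure the argument halts at $\codim\mathcal{T}\le 2$.
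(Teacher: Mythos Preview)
Your proposal is correct and follows exactly the approach the paper intends: the paper's entire proof of this corollary is the sentence ``Arguing as in the beginning of the proof above, one also obtains the following result,'' and what you have written is precisely that beginning of the proof of Proposition~\ref{P:transEucl}, with the virtually transversely Euclidean hypothesis relaxed to transversely affine and the periods condition on $\omega_1$ correctly dropped. Your flagged concern about the invariance of the singular hypersurface of the canonical first integral is the same point the paper glosses over with ``this hypersurface is clearly invariant by $\mathcal R$.''
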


\subsection{Synthesis (Proof of Theorem \ref{THM:iff})}\label{SS:non-algebraic}
Let  $X$ be a uniruled projective manifold and $\F$ be a codimension one foliation on $X$.
We  fix an irreducible component $M$ of the space of morphisms from $\mathbb P^1$ to $X$  containing a free morphism and
let $\Ftang$  stand for the tangential foliation of $\F$ defined on this irreducible component $M$.
We  denote by  $\overline \Ftang$  the foliation on $M$ with general leaf given by the Zariski closure of a leaf of $\Ftang$. The existence of
a foliation with these properties follows from \cite{MR2223484}.

\begin{thm}[Theorem \ref{THM:iff} of the Introduction]\label{T:iff}
    Let $\delta = \dim \overline{\Ftang} - \dim \Ftang$.
    If the general leaf of $\Ftang$ is not algebraic (i.e. $\delta>0$)  then $\delta \le 3$. Furthermore the following assertions hold true.
	\begin{enumerate}
		\item $\delta=3$ if and only if $\F$ is transversely projective but not transversely affine; and
		\item $\delta=2$ if and only if $\F$ is transversely affine but not virtually transversely Euclidean; and
		\item $\delta=1$ if and only if $\F$ is virtually transversely Euclidean.
	\end{enumerate}
\end{thm}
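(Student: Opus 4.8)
The plan is to combine Theorem~\ref{T:6.5} (together with the extra information recorded in Remark~\ref{R:6.5bis}), its partial converse established in Proposition~\ref{P:transEucl} and Corollary~\ref{C:transAff}, the extension result Theorem~\ref{T:extension}, the descent result Lemma~\ref{L:wellknown}, the uniqueness result Lemma~\ref{L:uniqueness}, and the persistence result Lemma~\ref{L:persistence}. The bound $\delta\le 3$ and the ``only if'' directions come essentially from applying Theorem~\ref{T:6.5} fiberwise; the subtle point is the ``if'' directions, which require extending a transverse structure known only on the very general fiber $\overline L\times\mathbb P^1$ of $\ev$ to all of $X$, and then comparing structures to pin down the exact hierarchy level.

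First I would set up the geometry. Fix a general leaf $L$ of $\Ftang$ with Zariski closure $\overline L$; then $\ev$ restricts to a dominant morphism $\overline L\times\mathbb P^1\to X$ whose image is dense (since $f$ is free and $M$ contains free morphisms the evaluation is dominant), and the pull-back $\G=\ev^*\F$ on $\overline L\times\mathbb P^1$ satisfies the hypotheses of Theorem~\ref{T:6.5}: the general fiber of $\pi:\overline L\times\mathbb P^1\to\overline L$ is the image of a general free rational curve, hence generically transverse to $\F$, and $\pi_*(\G\cap\mathcal H)$ is by construction the restriction of $\Ftang$, whose general leaf is Zariski dense \emph{in} $\overline L$. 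So Theorem~\ref{T:6.5} applies and gives $\codim(\Ftang|_{\overline L})=\delta\le 3$, with $\G$ defined by a closed rational $1$-form when $\delta=1$, transversely affine when $\delta=2$, transversely projective when $\delta=3$. Since $\delta=\dim\overline\Ftang-\dim\Ftang$ is exactly this codimension, the bound $\delta\le 3$ follows. For the ``only if'' directions: when $\F$ is virtually transversely Euclidean, so is $\G$ (pull-backs preserve this, as noted after Lemma~\ref{L:wellknown}), hence $\delta=1$ by Proposition~\ref{P:transEucl}; when $\F$ is transversely affine, so is $\G$, hence $\delta\le 2$ by Corollary~\ref{C:transAff}; and if moreover $\delta<2$ then $\delta=1$ forces $\G$ virtually transversely Euclidean (it is defined by a closed rational $1$-form), but I must descend this to $\F$ --- this is where Lemma~\ref{L:persistence} or a direct argument is needed. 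The cleanest route is to argue contrapositively in tandem with the ``if'' directions below.

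Now the ``if'' directions, which are the heart of the matter. Suppose $\delta=3$. Then $\G$ is transversely projective on the very general fiber $\overline L\times\mathbb P^1$ of $\ev:\overline L\times\mathbb P^1\to X$ --- more precisely, I view $X$ as the base and run $\ev$ as the ``fibration'' after replacing $X$ by the total space, or better, I apply Theorem~\ref{T:extension} in the form stated in the remark following it: $\F$ is transversely projective/affine/virtually Euclidean on a very general element of the covering family $\{\overline L\times\mathbb P^1\}$ of (images in) $X$ \emph{provided} the restriction admits no rational first integral. Here the no-rational-first-integral hypothesis is exactly what distinguishes the three cases, and I obtain it from Remark~\ref{R:6.5bis}: when $\delta=3$ the Riccati model $\mathcal R$ has no invariant $\rho$-fiber, so $\G$ cannot have a rational first integral (a rational first integral of $\G$ would restrict to one on a general $\rho$-fiber, forcing $\alpha,\beta,\gamma$ to be proportional, i.e.\ $\delta\le 1$). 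Thus Theorem~\ref{T:extension} yields that $\F$ is transversely projective. Similarly, when $\delta=2$ the Riccati model has exactly one invariant $\rho$-fiber, and $\G$ is transversely affine without rational first integral, so Theorem~\ref{T:extension} gives $\F$ transversely affine. When $\delta=1$, $\G$ is defined by $\frac{dz}{f(z)}+\eta$ with $\eta$ closed on $\overline L$; this is virtually transversely Euclidean (indeed transversely Euclidean after a ramified cover handling the zeros of $f$), again without rational first integral because the $\mathbb P^1$-direction is non-constant, so $\F$ is virtually transversely Euclidean.

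Finally I must upgrade these to the exact hierarchy positions and close the equivalences. If $\delta=3$ then $\F$ is transversely projective; were it transversely affine, the ``only if'' direction just proved would force $\delta\le 2$, a contradiction --- so $\F$ is transversely projective but not transversely affine. If $\delta=2$ then $\F$ is transversely affine; were it virtually transversely Euclidean, Proposition~\ref{P:transEucl} would give $\delta=1$, contradiction --- so $\F$ is transversely affine but not virtually transversely Euclidean. If $\delta=1$ then $\F$ is virtually transversely Euclidean, completing item~(3). Running these three implications together with their ``only if'' counterparts gives the three stated equivalences, and the case $\delta=0$ is excluded by hypothesis. \textbf{The main obstacle} I anticipate is verifying carefully that Theorem~\ref{T:extension} applies in this setting: one must check that the no-rational-first-integral condition on the very general fiber of $\ev$ genuinely holds in cases $\delta=1,2,3$ (extracted from Remark~\ref{R:6.5bis} as above), and that the covering-family version of Theorem~\ref{T:extension} is legitimately invoked --- in particular that the total space of the family $\{\overline L\times\mathbb P^1\}$ together with its projection to $X$ is a well-behaved fibration, so that pulling $\F$ back, extending, and descending via Lemma~\ref{L:wellknown} is valid. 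A secondary technical point is ensuring the "very general fiber" transversality and Zariski-density conditions of Theorem~\ref{T:6.5} hold uniformly, which requires that a general member of $M$ be free and in general position with respect to $\F$, and that the Zariski closure of a general leaf of $\Ftang$ sweeps out a dense subset of $M$ --- both consequences of the freeness assumption on $M$ and the construction of $\overline\Ftang$ via \cite{MR2223484}.
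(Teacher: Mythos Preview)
Your proposal is correct and follows essentially the same route as the paper's proof: set up $\G=(\ev^*\F)|_{U\times\mathbb P^1}$ on the Zariski closure of a general $\Ftang$-leaf, apply Theorem~\ref{T:6.5} and Remark~\ref{R:6.5bis} to get $\delta\le 3$ and the structure on $\G$, invoke Theorem~\ref{T:extension} together with Lemma~\ref{L:wellknown} to transport the structure to $\F$, and use Proposition~\ref{P:transEucl} and Corollary~\ref{C:transAff} for the converse bounds, then chain the three equivalences exactly as the paper does.

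One small correction: your argument that $\G$ has no rational first integral (needed for Theorem~\ref{T:extension}) is not quite the right one. Restricting a first integral of $\G$ to a $\rho$-fiber does not by itself force $\alpha,\beta,\gamma$ to be proportional. The clean argument is the one implicit in Lemma~\ref{L:horizontal}: any rational first integral $F$ of $\G$ is constant on $L\times\{z\}$ for each $z$ (these lie in leaves of $\G$), hence constant on $\overline L\times\{z\}$ by Zariski density of $L$, so $F$ depends only on $z$; but then the leaves of $\G$ are $\rho$-fibers and $\delta=0$, contradicting the standing hypothesis $\delta>0$. The paper's proof glosses over this verification, so your instinct to flag it was sound even though the justification you wrote needs this adjustment. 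Lemmas~\ref{L:uniqueness} and~\ref{L:persistence}, which you list among your tools, are not actually needed here (nor does the paper use them in this proof).
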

\begin{proof}
	Let $L$ be a general leaf of $\Ftang$ which we will assume not algebraic. If $\overline L$ is the  Zariski closure of $L$ and $U$
is the smooth locus of $\overline L$
	then we are in position to apply Theorem \ref{T:6.5} to $\G = (\ev^*\F)_{|U\times \mathbb P^1}$ and deduce
    that $\G$ is transversely projective and  $ \dim \overline L - \dim L = \delta \in \{ 1, 2, 3\}$. Theorem \ref{T:extension} implies that $\ev^* \F$ is also transversely projective.

    If the general leaf of $\Ftang$ is not algebraic  then combining  Theorem \ref{T:6.5}  with  Theorem \ref{T:extension} one deduces
that $\F$ is transversely projective.

	Let us first prove assertion (3). If $\delta  =1$ then $\G$ is defined by a closed rational $1$-form, see Remark \ref{R:6.5bis}.
    Theorem \ref{T:extension} implies that $\ev^* \F$ is virtually transversely Euclidean. Lemma \ref{L:wellknown} implies that the
    same holds true for $\F$. Reciprocally, if $\F$ is virtually transversely Euclidean then the same holds true for $\G$. We apply Proposition \ref{P:transEucl} to deduce that $\delta=1$.
    Assertion (3) follows.

    The proof of assertion (2) is similar. If $\delta=2$ then $\G$ is transversely affine. Theorem \ref{T:extension} implies that $\ev^* \F$ is transversely affine and Lemma \ref{L:wellknown} implies that the same holds true for $\F$. Reciprocally, if $\F$ is transversely affine then Corollary \ref{C:transAff} implies $\delta \le 2$. Assertion (3) implies $\delta \ge 2$ and assertion (2) follows.

    As before, Theorem \ref{T:extension} combined with Lemma \ref{L:wellknown} imply that $\F$ is transversely projective. Reciprocally, as we already know that $\delta \le 3$, assertions (2) and (3) imply assertion (1).
\end{proof}

\subsection{Bootstrapping (Proof of Theorem \ref{THM:bootstrapping})}

We  proceed to prove Theorem \ref{THM:bootstrapping}. We keep the notations from Section \ref{SS:non-algebraic}.

\begin{lemma}\label{L:transversely affine}
    Assume  that the general leaf of $\Ftang$ is not algebraic.
    If $\F$ has an algebraic leaf $Y \subset X$ which intersects the image of a general morphism $f \in M$ then
    $\F$ is transversely affine.
\end{lemma}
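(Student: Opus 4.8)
The strategy is to exploit the algebraic leaf $Y$ to improve the transverse structure. By Theorem \ref{T:iff} we already know that $\F$ is transversely projective (with $\delta \in \{1,2,3\}$), so the goal is to rule out the case $\delta = 3$, i.e. to show $\F$ is not merely transversely projective but in fact transversely affine. The natural candidate for an invariant section providing the affine reduction is the algebraic leaf $Y$ itself: a transversely projective foliation becomes transversely affine precisely when the projective structure admits a "section at infinity", i.e. an invariant hypersurface along which the developing map sends leaves to a fixed point of the monodromy. The hypothesis that $Y$ meets the image of a general $f \in M$ is exactly what guarantees that $Y$ pulls back to a genuine ($\rho$-horizontal, by Lemma \ref{L:horizontal}) invariant hypersurface of $\G = (\ev^*\F)_{|\overline L \times \mathbb P^1}$ rather than disappearing.

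First I would set up, as in the proof of Theorem \ref{T:iff}, the general leaf $L$ of $\Ftang$, its Zariski closure $\overline L$ with smooth locus $U$, and $\G = (\ev^*\F)_{|U \times \mathbb P^1}$, which by Theorem \ref{T:6.5} is transversely projective. By Remark \ref{R:6.5bis}, $\G = (\id_B \times \varphi)^*\mathcal R$ for a Riccati foliation $\mathcal R$ on $B \times \mathbb P^1$ (here $B = U$), with $\mathcal R$ defined by a $1$-form $\omega = dz + \alpha + z\beta + z^2\gamma$, $\alpha,\beta,\gamma$ defining $\mathcal T$. Next I would analyze the preimage $\widehat Y$ of $Y$ under $\ev: \overline L \times \mathbb P^1 \to X$. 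Since $Y$ meets the image of the general morphism parametrized by $L$, $\widehat Y$ is a nonempty hypersurface of $\overline L \times \mathbb P^1$, invariant by $\G$, and generically transverse to the fibers of $\pi$ (because a general deformation of $f$ along the tangential foliation still meets $Y$ but $Y$ is not all of $X$); hence by Lemma \ref{L:horizontal} it is $\rho$-horizontal, i.e. a union of sections $\{z = \text{const}\}$ after suitable coordinates. So we may arrange that $\{z = \infty\}$ is $\mathcal R$-invariant, which forces $\gamma = 0$. Then $\mathcal R$ is defined by $\omega = dz + \alpha + z\beta$ and the integrability condition gives $d\beta = 0$ and $d\alpha = \alpha \wedge \beta$ — that is, $(\omega,\beta)$ is a transverse affine structure for $\mathcal R$, so $\codim \mathcal T = \delta \le 2$ and $\mathcal R$ (hence $\G$, by pulling back via $\id_B \times \varphi$ and using Lemma \ref{L:wellknown}) is transversely affine. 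Finally, Theorem \ref{T:extension} upgrades this to: $\ev^*\F$ is transversely affine, and Lemma \ref{L:wellknown} descends this to $\F$ itself.

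The main obstacle I anticipate is the verification that $\widehat Y$ is genuinely $\rho$-horizontal and nonempty of the right dimension — more precisely, that a general member of the family $M$, and more to the point a general deformation \emph{along $\Ftang$}, still meets $Y$ transversely enough that $\widehat Y$ maps dominantly to $\overline L$ but does not fill up $\overline L \times \mathbb P^1$. One must check that intersecting the image of $f$ with $Y$ is an open condition persisting along the leaf $L$ (so that $\widehat Y$ dominates $\overline L = B$) while $Y \subsetneq X$ keeps $\widehat Y$ a proper subvariety; Lemma \ref{L:horizontal} then does the rest. A minor secondary point is the degenerate possibility that $\widehat Y$ could a priori be a fiber of $\pi$, but that is excluded precisely because $Y$ is $\F$-invariant and meets the moving curves transversely rather than containing them, so the corresponding component of $\widehat Y$ is transverse to the $\pi$-fibers and Lemma \ref{L:horizontal} applies.
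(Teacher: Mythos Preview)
Your proposal is correct and follows essentially the same approach as the paper: pull back the algebraic leaf $Y$ to obtain a $\G$-invariant hypersurface on $U\times\mathbb P^1$, use Lemma \ref{L:horizontal} to see it is $\rho$-horizontal, push it forward through $\id_U\times\varphi$ to get an invariant horizontal section of the Riccati foliation $\mathcal R$, place that section at $\{z=\infty\}$ so that $\gamma=0$, and conclude $\delta\le 2$. The only cosmetic difference is your final step: once $\delta\le 2$ is established you may simply invoke Theorem \ref{T:iff} directly (as the paper does) rather than re-running the machinery of Theorem \ref{T:extension} and Lemma \ref{L:wellknown}, which is precisely what the proof of Theorem \ref{T:iff} already packages.
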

\begin{proof}
	Let $L$ be a general leaf of $\Ftang$; $\overline L$ be its Zariski closure; and $U\subset \overline L$ be the smooth locus of the Zariski closure.
    Lemma \ref{L:horizontal} implies  that $\G = \ev^* \F_{|U \times \mathbb P^1}$ has a horizontal leaf.
    Since $\G = (\id_{U}\times \varphi)^* \mathcal R$ it follows that the Riccati foliation $\mathcal R$ also has a horizontal leaf.
    Thus, in a suitable coordinate system where the horizontal is at $\{ z = \infty\}$, the Riccati foliation   $\mathcal R$ is defined by
	\[
		dz + \omega_0 + z \omega_1 \, .
	\]
	Hence the restriction of $\Ftang$ to $\overline L$ is defined by $\omega_0$ and $\omega_1$ and
    therefore $\dim \overline \Ftang - \dim \Ftang \le 2$. We  apply Theorem \ref{T:iff} to conclude that $\F$ is transversely affine.
\end{proof}

\begin{lemma}\label{L:Q-Euclidean}
	Assume $X$ is simply connected and that the general leaf of $\Ftang$ is not algebraic.
    If the image of a general morphism $f \in M$ intersects non-trivially  every algebraic leaf of $\F$ then
    $\F$ is virtually transversely Euclidean.
\end{lemma}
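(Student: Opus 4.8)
The plan is to bootstrap from Lemma \ref{L:transversely affine}, which already tells us that $\F$ is transversely affine under weaker hypotheses (every algebraic leaf meeting the image of a general morphism is certainly met non-trivially when the general morphism meets all algebraic leaves). So we may assume $\F$ is transversely affine but, aiming at a contradiction, is \emph{not} virtually transversely Euclidean. By the structure theorem for transversely affine foliations (as invoked in Lemma \ref{L:persistence}), there is then a dominant rational map $\varphi : X \dashrightarrow S$ to a ruled surface and a Riccati foliation $\mathcal R$ on $S$ with $\varphi^*\mathcal R = \F$. The Riccati foliation $\mathcal R$ has at least one invariant fiber of the ruling $S \to C$ over the base curve $C$; call its image in $C$ the point $c_0$, and let $Y_0 \subset X$ be the corresponding $\F$-invariant algebraic hypersurface (the pull-back of that invariant fiber, or rather the closure of the preimage under $\varphi$).

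Next I would use simple connectedness. The monodromy of the Riccati foliation $\mathcal R$ gives a representation of $\pi_1(C \setminus \Delta)$ into $\mathrm{PSL}_2(\mathbb C)$ (in fact into the affine group, since $\mathcal R$ is transversely affine with an invariant section), where $\Delta \subset C$ is the finite set of points over which $\mathcal R$ has invariant fibers. Pulling back by $\varphi$, and using that the general free morphism $f : \mathbb P^1 \to X$ meets every algebraic leaf of $\F$ — in particular meets all the invariant hypersurfaces sitting over $\Delta$, and is otherwise transverse to the foliation — one sees that $\varphi \circ f : \mathbb P^1 \to C$ is a non-constant map whose ramification and the monodromy of $\mathcal R$ together must be compatible with $\mathbb P^1$ being simply connected. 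Concretely: the pull-back of $\mathcal R$ to $\mathbb P^1$ via $\varphi \circ f$ (composed suitably) is a Riccati foliation on a ruled surface over $\mathbb P^1$, and its monodromy representation factors through $\pi_1(\mathbb P^1 \setminus (\varphi\circ f)^{-1}(\Delta))$; but because $X$ is simply connected the global monodromy of $\mathcal R$ pulled back to $X$ must be trivial, forcing the local monodromies around the invariant fibers to be trivial as well. A Riccati foliation with trivial monodromy around all its invariant fibers has a meromorphic first integral of the Riccati equation — equivalently, after the transverse affine normalization $dz + \omega_0 + z\omega_1$, the form $\omega_1$ has all residues in $\mathbb Z$ (integral periods) rather than merely rational ones, and the local monodromy being trivial pins the structure down to a transversely Euclidean one on $S$, hence on $X$.

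More carefully, here is the mechanism I expect to carry the argument: transverse affine but not virtually Euclidean means $\omega_1$ (in a defining form $\omega = dz + \omega_0 + z\omega_1$ pulled back from $S$) is a logarithmic $1$-form with at least one period \emph{not} commensurable to $2\pi\sqrt{-1}$, OR the multiplicative monodromy is non-trivial. Using Lemma \ref{L:horizontal}-type reasoning on $\overline L \times \mathbb P^1$, the horizontal leaf produced by the algebraic-leaf hypothesis forces $\gamma = 0$ in the normal form on $\overline{L}$, so the tangential picture sees $\mathcal R$ as $dz + \omega_0 + z\omega_1$ with $\omega_1$ closed; simple connectedness of $X$ then forces the monodromy of $\omega_1$ around the (finitely many) invariant components — which the general $f$ meets — to be trivial, i.e. $\exp(\int \omega_1)$ is single-valued on $X$, hence rational. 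That means $(\exp\int\omega_1)\,\omega_0$ is a closed rational $1$-form defining $\F$, so $\F$ is transversely Euclidean, contradicting the standing assumption. Therefore $\F$ is virtually transversely Euclidean.

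The main obstacle I anticipate is the bookkeeping around monodromy versus periods: one has to be careful to distinguish (i) the \emph{additive/multiplicative monodromy} of the Riccati foliation $\mathcal R$ on $S$, (ii) the periods of the logarithmic form $\omega_1$, and (iii) how the hypothesis ``$f$ meets every algebraic leaf'' translates into ``the loops generating $\pi_1$ of the complement of the invariant divisor are killed after pulling back to the simply connected $X$.'' The cleanest route is probably to argue entirely on $X$: the invariant hypersurfaces of $\F$ are finite in number among those dominating $C$, the general $f$ meets each transversely, and simple connectedness of $X$ kills the holonomy of $\F$ around each of them, which for a transversely affine structure is exactly the statement that the multiplicative part of the holonomy is trivial and the additive part integrates to a rational function — giving the transversely Euclidean conclusion and the desired contradiction.
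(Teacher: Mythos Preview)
Your approach has a genuine gap in the use of simple connectedness. You repeatedly assert that because $X$ is simply connected, the monodromy of $\omega_1$ (or of the pulled-back Riccati foliation) around the $\F$-invariant hypersurfaces must be trivial, i.e.\ that $\exp(\int\omega_1)$ is single-valued and hence rational. This is not correct: simple connectedness of $X$ says nothing about $\pi_1(X\setminus D)$ for $D$ the polar divisor of $\omega_1$, and it is precisely the local monodromies around the components of $D$ (equal to $e^{2\pi i\,\mathrm{res}}$) that you are trying to control. So the argument is circular: you want to conclude that the residues of $\omega_1$ are integers, but that is exactly the content of ``virtually transversely Euclidean'' that you assumed fails. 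The hypothesis that the general $f$ meets every algebraic leaf is mentioned but never actually exploited in your argument.

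The paper's proof proceeds quite differently and does not invoke the structure theorem for transversely affine foliations. After using Lemma~\ref{L:transversely affine} to get a transverse affine structure $(\omega_0,\omega_1)$ on $X$, simple connectedness is used only to translate ``not virtually transversely Euclidean'' into ``$\omega_1$ has at least one residue which is not rational''. Assuming $\delta\ge 2$, Remark~\ref{R:6.5bis} shows that $\G=(\ev^*\F)_{|U\times\mathbb P^1}$ carries a \emph{second} transverse affine structure $(\alpha_0,\alpha_1)$ coming from the Riccati normal form, with $\alpha_1$ logarithmic and having \emph{integral} residues along each fiber $\{f\}\times\mathbb P^1$. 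The hypothesis that a general $f$ meets every algebraic leaf of $\F$ is now used to guarantee that the pull-back $\ev^*\omega_1$ still sees the irrational residue on such a fiber, so $(\ev^*\omega_0,\ev^*\omega_1)$ and $(\alpha_0,\alpha_1)$ are genuinely inequivalent. Lemma~\ref{L:uniqueness} then forces $\G$ to be transversely Euclidean, and Theorem~\ref{T:extension} propagates this back to $\F$, giving the contradiction. The crucial ingredient you are missing is this comparison of two affine structures via Lemma~\ref{L:uniqueness}.
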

\begin{proof}
	We keep the notation from Lemma \ref{L:transversely affine}. According to Theorem \ref{T:iff} it suffices to show that $\G = \ev^* \F_{|U\times \mathbb P^1}$ is defined by a closed rational $1$-form.
	
	First notice that $\F$ must have some algebraic leaf. Otherwise the existence of a transverse structure for $\F$ given by Theorem \ref{T:iff}
	and the simple connectedness of $X$ would give rise to the existence of a rational first integral for $\F$. It follows that the leaves of $\Ftang$ are algebraic  contradicting our initial assumption.
    Therefore we can apply Lemma \ref{L:transversely affine} to deduce that $\F$ is a transversely affine foliation. Fix rational $1$-forms $\omega_0$ and $\omega_1$ on $X$ such that
    $d\omega_0 = \omega_0 \wedge \omega_1$,  $d \omega_1=0$, and $\omega_0$ defines $\F$.
	
	If $\F$ is virtually transversely Euclidean then the result follows from Theorem \ref{T:iff}.
	Assume from now on that $\F$ is not virtually transversely Euclidean. Since $X$ is simply connected, this implies that
	$\omega_1$ is not a logarithmic differential with rational residues. Notice that $\omega_0$ and $\omega_1$ are not uniquely defined, but any different pair $(\omega_0',\omega_1')$ satisfying the conditions above will
	satisfy $\omega_0 = h \omega_0'$ and $\omega_1 = \omega_1' + d\log h$. In particular, the fact that $\omega_1$ is not a logarithmic $1$-form with rational residues does not depend on the choice of the pair.
	
	Aiming at a contradiction assume that $\dim \overline L - \dim L \ge 2$. According to Remark \ref{R:6.5bis}, the foliation  $\G$
    is the pull-back of a Riccati foliation on $U\times \mathbb P^1$ leaving invariant the section at infinity. A simple computation shows
    that this implies the existence of a transverse affine structure for  $\G$  defined by a pair $(\alpha_0,\alpha_1)$ such that the restriction of
    $\alpha_1$  at  a general fiber $\mathbb P^1$ is logarithmic with integral residues. But
    the pull-back of $\omega_1$  under the evaluation morphism  is not of this form since the image of $f$ intersects all components of the polar divisor of $\omega_1$. Hence $\G$ admits two non-equivalent transversely affine structures. Lemma \ref{L:uniqueness} implies that $\G$ is transversely Euclidean.
    The lemma follows from Theorem \ref{T:extension}.
\end{proof}

\begin{thm}[Theorem \ref{THM:bootstrapping} of the Introduction]
If $X$ is simply connected, the general leaf of $\Ftang$ is not algebraic, and  the general morphism $f \in M$ intersects non-trivially and transversely every algebraic hypersurface invariant by $\F$ then $\F$ is defined by a closed rational $1$-form
without divisorial components in its zero set.
\end{thm}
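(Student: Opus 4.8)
The plan is to deduce the statement from the sequence of results already in place, treating it as the final rung of the bootstrapping ladder. We are given that $X$ is simply connected, that the general leaf of $\Ftang$ is not algebraic, and that the general morphism $f \in M$ meets every $\F$-invariant hypersurface non-trivially and transversely. First I would invoke Lemma \ref{L:Q-Euclidean}: its hypotheses are a subset of ours (intersecting every invariant hypersurface non-trivially is weaker than intersecting it non-trivially \emph{and} transversely), so we conclude that $\F$ is virtually transversely Euclidean. By Theorem \ref{T:iff}, assertion (3), this means $\delta = \dim \overline\Ftang - \dim \Ftang = 1$, i.e.\ the Zariski closure $\overline L$ of a general leaf $L$ of $\Ftang$ has codimension one inside it.

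Next I would analyze the geometry on $\overline L \times \mathbb P^1$. Let $U \subset \overline L$ be the smooth locus and $\G = \ev^*\F_{|U \times \mathbb P^1}$. Since $\delta = 1$, Remark \ref{R:6.5bis} tells us that $\G$ is defined by a closed rational $1$-form of the shape $\frac{dz}{f(z)} + \eta$, where $f \in \mathbb C(\mathbb P^1)$ and $\eta$ is a closed rational $1$-form on $U$; moreover the zero divisor of this form has no divisorial components. The key point is to upgrade "no divisorial zeros on $U \times \mathbb P^1$" to "no divisorial zeros on $X$." For this I would use that $\F$ itself is virtually transversely Euclidean: after passing to a finite ramified cover $\pi: \widetilde X \to X$ (which we may take étale since $X$ is simply connected, hence in fact no cover is needed), $\F$ is defined by a \emph{closed} rational $1$-form $\theta$ on $X$. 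The divisorial part of the zero set of $\theta$ is then intrinsic to $\F$ (it does not change under multiplying $\theta$ by a rational function whose differential is logarithmic, which is the only freedom in a transversely Euclidean structure up to the virtual ambiguity, and simple connectedness removes the ambiguity). So it remains to show this divisorial zero set is empty.

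The main obstacle — and the place where the transversality hypothesis is used — is ruling out divisorial components in the zero set of $\theta$. Here is the mechanism I expect to work: a divisorial component $Z$ of the zero divisor of the closed form $\theta$ defining $\F$ is necessarily $\F$-invariant (zeros of a closed defining form are invariant hypersurfaces). Pulling $\theta$ back by the evaluation morphism, $\ev^*\theta$ is a closed rational $1$-form defining $\G$ on $U \times \mathbb P^1$; comparing with the normal form $\frac{dz}{f(z)} + \eta$ from Remark \ref{R:6.5bis}, which has no divisorial zeros, forces $\ev^*\theta$ to differ from it by a rational function $g$ with $\ev^*\theta = g \cdot (\frac{dz}{f(z)}+\eta)$, and closedness of both forms makes $dg/g$ a logarithmic correction; the divisor $Z$ must then appear in the polar or zero locus of $g$. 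But the general morphism $f \in M$ meets $Z$ transversely and non-trivially, so the restriction of $\ev^*\theta$ to a general horizontal slice $\{m\} \times \mathbb P^1$ has a genuine zero of the expected order along $Z \cap \operatorname{im}(f)$ coming from $\theta$, whereas the restriction of $\frac{dz}{f(z)}+\eta$ is a closed rational $1$-form on $\mathbb P^1$ of the form $\frac{dz}{f(z)}$ with no such zero along a point imposed by $Z$ unless that point is a pole of $f$ — and transversality of the intersection prevents the order of contact needed for a pole to absorb it. Chasing this contradiction through shows the zero divisor of $\theta$ has no component, i.e.\ $\F$ is defined on $X$ by a closed rational $1$-form with no divisorial components in its zero set, exactly as the analogous argument in Lemma \ref{L:Q-Euclidean} handled the two non-equivalent affine structures. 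I would structure the write-up so that this last step reuses the divisor bookkeeping already deployed there, with the transversality hypothesis replacing the weaker non-triviality precisely at the point where one needs the contact order along $Z$ to be exactly one.
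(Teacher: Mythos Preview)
Your opening moves are correct: Lemma \ref{L:Q-Euclidean} gives that $\F$ is virtually transversely Euclidean, Theorem \ref{T:iff} gives $\delta=1$, and Remark \ref{R:6.5bis} gives the closed normal form for $\G$ on $U\times\mathbb P^1$. The gap is in your passage from \emph{virtually} transversely Euclidean to transversely Euclidean on $X$. You write that the finite cover ``may be taken étale since $X$ is simply connected, hence in fact no cover is needed''. This is false: the cover in the definition of a virtually transversely Euclidean structure is \emph{ramified} along the polar divisor of $\omega_1$, not étale, and simple connectedness of $X$ says nothing about such covers. Concretely, $\omega_1$ is logarithmic with residues in $\mathbb Q$, and the only freedom in the affine structure shifts these residues by integers; if some residue lies in $\mathbb Q\setminus\mathbb Z$ there is no way to make $(\exp\int\omega_1)\omega_0$ rational on $X$ itself, regardless of $\pi_1(X)$. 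So you have not produced a closed rational $1$-form on $X$.

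The paper closes this gap by a different mechanism, and this is exactly where the \emph{transversality} hypothesis is spent --- not in the divisorial-zeros step where you placed it. One assumes $\omega_1$ has a residue $\lambda\in\mathbb Q\setminus\mathbb Z$ along some component $D$ of its polar divisor; $D$ is $\F$-invariant, so the general $f$ meets $D$ transversely, and hence $\ev^*\omega_1$ still has residue $\lambda$ (not $m\lambda$ for some multiplicity $m>1$) along the corresponding horizontal divisor. Thus $(\ev^*\omega_0,\ev^*\omega_1)$ is a virtually transversely Euclidean structure for $\G$ which is \emph{not} transversely Euclidean, while Remark \ref{R:6.5bis} furnishes a genuinely transversely Euclidean one. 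These two structures are inequivalent, so Lemma \ref{L:uniqueness}(3) forces $\G$ to have a rational first integral, whence the leaves of $\Ftang$ are algebraic --- contradiction. For the final step (no divisorial zeros), the paper needs only non-trivial intersection: a divisorial zero $Z$ of the closed form $\omega$ is $\F$-invariant, so $\ev^{-1}(Z)$ dominates $U$; comparing $\ev^*\omega$ with the normal form $\frac{dz}{g(z)}+\eta$ (their ratio is a first integral of $\G$, hence a function of $z$ alone by Zariski density of $L$) forces $\eta=0$, i.e.\ $\delta=0$, again a contradiction. Your divisor bookkeeping in that last paragraph is headed in the right direction but is more involved than necessary, and the transversality you invoke there is not actually what does the work.
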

\begin{proof}
  We start by showing that $\F$ is defined by a closed rational $1$-form.  Lemma \ref{L:Q-Euclidean} implies that $\F$ is virtually transversely Euclidean. Therefore the transverse structure for $\F$ is defined by a pair $(\omega_0, \omega_1)$
  where $\omega_1$ is a closed logarithmic $1$-form with periods commensurable to $\pi \sqrt{-1}$. If the periods of $\omega_1$ are integral multiples of $2 \pi \sqrt{-1}$ then
  \[
     \exp\left(\int \omega_1\right) \omega_0
  \]
  is the sought closed rational $1$-form.
  Assume from now on that the periods of $\omega_1$ are not integral multiples of $2 \pi \sqrt{-1}$.

  Let $L$ be a general leaf of $\Ftang$ and $U \subset \overline L$ be the smooth locus of its Zariski closure. Let $\G = \ev^* \F_{|U\times \mathbb P^1}$ be the pull-back of $\F$ to $M \times \mathbb P^1$ under the evaluation morphism. On the one hand, according to Remark \ref{R:6.5bis}, $\G$ is defined by a closed rational $1$-form. On the other hand, the
  transversality of the general $f \in M $ with the $\F$-invariant algebraic hypersurfaces imply that the  periods of $\ev^* \omega_1$ are also not integral multiples of $2 \pi \sqrt{-1}$.
  Hence $\G$ admits two non-equivalent virtually transversely Euclidean structures. Lemma \ref{L:uniqueness} implies that all the leaves of $\G$ are algebraic. It follows that all leaves of $\Ftang$ are algebraic contrary to our assumptions. This concludes the proof that $\F$ is defined by a closed rational $1$-form $\omega$.

  To verify that $\omega$ does not have divisorial components in its zero set, we proceed similarly. Pull-back $\omega$ to $U\times \mathbb P^1$ using the evaluation morphism. Any irreducible divisorial component of the zero set of $\omega$ is a  $\F$-invariant algebraic hypersurface. Therefore, by assumption, the restriction of the $1$-form $\ev^*\omega$  on $U \times \mathbb P^1$ will have an  irreducible component in its zero set which dominates $U$. This is only possible if $\ev^* \omega$ does not depend on the variables of $U$, i.e. $\ev^* \omega = a(z) dz$ where $z$ is a coordinate on $\mathbb P^1$ and $a \in \mathbb C(\mathbb P^1)$ is a rational function. It follows that $\overline L$ and $L$ have the same dimension, contrary to our assumptions. Theorem \ref{THM:bootstrapping} follows.
\end{proof}

\subsection{Tangential foliation with algebraic leaves} When  $\Ftang$ is a foliation by algebraic leaves, the original foliation $\F$ inherits a subfoliation by algebraic leaves.
We keep the notation settled at the beginning of Section \ref{SS:non-algebraic}.

\begin{prop}\label{P:algleaves}
	If all the leaves of $\Ftang$ are algebraic, then there exists
	a foliation by algebraic leaves $\mathcal A$ contained in $\F$ such that $H^0(\mathbb P^1, f^* T_{\mathcal A}) = H^0(\mathbb P^1,f^* \TF)$ for any sufficiently general morphism $f \in M$. In particular, $\F$ is the pull-back under a rational map of a foliation on a lower dimensional manifold.
\end{prop}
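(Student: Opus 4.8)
The plan is to produce $\mathcal A$ as the subfoliation of $\F$ generated by the directions along which tangential deformations move the points of a general curve $f(\mathbb P^1)$, and to use the algebraicity of the leaves of $\Ftang$ to see that this subfoliation is algebraically integrable. First I would record the local picture. Since every leaf of $\Ftang$ is algebraic we have $\overline{\Ftang}=\Ftang$, so (using \cite{MR2223484}) algebraic integrability provides a dominant rational map $q\colon M\dashrightarrow N$ whose general fibre is a leaf of $\Ftang$; fix a general such leaf $L$. By the alternative description of the tangential foliation recalled above, $\ev$ maps each slice $L\times\{t\}$ into a single leaf of $\F$; hence $Z_{f,t}:=\overline{\ev(L\times\{t\})}$ is an algebraic subvariety of $X$ contained in a leaf of $\F$, and for a general $f\in L$ with $x=f(t)$ its tangent space at $x$ is $V_f:=\{\,\sigma(t):\sigma\in H^0(\mathbb P^1,f^*\TF)\,\}\subseteq T_x\F$. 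As $(f,t)$ varies the varieties $Z_{f,t}$ cover a dense open subset of $X$. Define $\mathcal A$ to be the foliation whose leaf through a general point of $X$ is the maximal subvariety obtained by repeatedly adjoining slices $Z_{f,t}$ that meet those already selected; equivalently, $\mathcal A$ is the algebraic closure, in the sense of \cite{MR2223484}, of the involutive closure of the distribution $x\mapsto\sum_{f\ni x}V_f$.

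Granting for the moment that this $\mathcal A$ is well defined, the cohomological equality is formal. The inclusion $H^0(\mathbb P^1,f^*T_{\mathcal A})\subseteq H^0(\mathbb P^1,f^*\TF)$ follows from $\mathcal A\subseteq\F$; conversely, for a sufficiently general $f$ any $\sigma\in H^0(\mathbb P^1,f^*\TF)$ satisfies $\sigma(t)\in V_f\subseteq T_{f(t)}\mathcal A$ for every $t$, so $\sigma$ is a global section of $f^*T_{\mathcal A}$. The point that requires work --- and the step I expect to be the main obstacle --- is precisely that $\mathcal A$ is a \emph{foliation with algebraic leaves, contained in $\F$}. The key observation is that whenever two slices $Z_{f,t}$ and $Z_{f',t'}$ meet at a point $y$, both are contained in the unique leaf of $\F$ through $y$; consequently every subvariety produced by the adjoining procedure lies inside a single leaf of $\F$, so it has dimension at most $\dim X-1$ and is a \emph{proper} subvariety of $X$. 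Thus the process can neither escape an $\F$-leaf nor Zariski-fill $X$, and by \cite{MR2223484} it stabilises: the maximal subvarieties obtained are the leaves of a foliation $\mathcal A$ with algebraic leaves, each contained in a leaf of $\F$, which forces $T_x\mathcal A\subseteq T_x\F$ at a general point and hence $\mathcal A\subseteq\F$. This is exactly where the hypothesis enters: without it the slices $\ev(L\times\{t\})$ need not be algebraic or proper and the starting family degenerates. The genuinely delicate technical ingredient is to know that a chain of these algebraic slices --- a priori only a countable union --- is again an algebraic subvariety; this is what the cited result supplies, applied to the involutive closure of the tangential distribution.

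Finally, a foliation $\mathcal A$ with algebraic leaves and positive rank admits a dominant rational map $\pi\colon X\dashrightarrow Z$ with $\dim Z<\dim X$ whose general fibres are the leaves of $\mathcal A$. Since $\mathcal A\subseteq\F$, every leaf of $\F$ is a union of fibres of $\pi$, so $\F$ descends to a codimension one foliation $\F_Z$ on $Z$ with $\F=\pi^*\F_Z$; this is the last assertion. (We may assume $\dim\Ftang\ge1$, the substantive case, since otherwise $H^0(\mathbb P^1,f^*\TF)=0$ and $\mathcal A$ is the foliation by points.)
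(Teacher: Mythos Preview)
Your argument is essentially correct, but it takes a more laborious route than the paper's. The paper does not construct $\mathcal A$ from the slices $Z_{f,t}$; instead it invokes a pre-existing lemma (\cite[Lemma 2.4]{2011arXiv1107.1538L}) asserting that any foliation $\F$ contains a unique \emph{maximal} subfoliation $\mathcal A\subseteq\F$ with algebraic leaves. With this $\mathcal A$ in hand, the proof reduces to a two-line double inclusion on tangential foliations: $\mathcal A\subseteq\F$ forces $\mathcal A_{\rm tang}\subseteq\Ftang$; conversely, for a general (algebraic) leaf $L$ of $\Ftang$ and any $z\in\mathbb P^1$, the image $\ev(L\times\{z\})$ is an algebraic subvariety sitting inside a single leaf of $\F$, hence by maximality inside a leaf of $\mathcal A$, which gives $\Ftang\subseteq\mathcal A_{\rm tang}$. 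The equality $\Ftang=\mathcal A_{\rm tang}$ then yields $H^0(\mathbb P^1,f^*T_{\mathcal A})=H^0(\mathbb P^1,f^*\TF)$ immediately.

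Your construction is in effect rebuilding the maximal algebraic subfoliation in this particular situation: the chain-of-slices procedure you describe, together with the bound ``each chain stays inside an $\F$-leaf'', is precisely the content that the cited lemma packages. What the paper's approach buys is brevity and the avoidance of the stabilisation step you correctly flag as delicate (that countable chains of $Z_{f,t}$'s are again algebraic of bounded dimension). What your approach buys is a concrete description of $\mathcal A$ in terms of the tangential data, and it makes explicit why the hypothesis that $\Ftang$ has algebraic leaves is exactly what is needed for the slices $Z_{f,t}$ to be algebraic. Either way the pull-back conclusion follows as you say, by taking $\pi$ to be the map to the leaf space of $\mathcal A$.
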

\begin{proof}
	Let $\mathcal A$ be the maximal foliation by algebraic leaves contained in $\F$. The existence of such
	$\mathcal A$ is assured by \cite[Lemma 2.4]{2011arXiv1107.1538L}. If $L$ is a general leaf of $\Ftang$ then
	the image of $L\times \{ z\}$ (for any fixed $z \in \mathbb P^1$) under the evaluation morphism $\ev : M \times \mathbb P^1 \to X$  is contained in a leaf of $\F$. Moreover, it is also contained in a leaf of $\mathcal A$.
    This makes clear that $\mathcal A_{\rm tang}$ contains $\Ftang$. But since $\mathcal A$ is contained in $\F$, we must have $\mathcal A_{\rm tang}$ contained in
	$\Ftang$. Hence $\Ftang= \mathcal A_{\rm tang}$ and the equality $H^0(\mathbb P^1, f^* T_{\mathcal A}) = H^0(\mathbb P^1,f^* \TF)$ holds true for any
	sufficiently general morphism $f \in M$.
\end{proof}


\section{Foliations of degree three}\label{S:3}

\subsection{Proof of Theorem \ref{THM:A}}
Let $\F$ be a codimension one foliation of degree three on $\mathbb P^3$. The canonical bundle of $\F$ is  $\KF = \mathcal O_{\mathbb P^3}(1)$. Therefore, if $f: \mathbb P^1 \to \mathbb P^3$ is the parametrization of a general line then $h^0(\mathbb P^1, f^* T\F) \neq 0$. If $M \subset \Mor(\mathbb P^1, \mathbb P^3)$ is the irreducible component containing $f$, then $\Ftang$ is a foliation of positive dimension on $M$.

If the general leaf of $\Ftang$ is not algebraic then Theorem \ref{THM:bootstrapping} guarantees that $\F$ is defined by a closed rational $1$-form without divisorial components in its zero set. Thus $\F$ is described by item (1).

If the general leaf of $\Ftang$ is algebraic then let $\mathcal A\subset \mathcal F$ be the foliation by algebraic leaves given by
Proposition \ref{P:algleaves}. In particular, $h^0(\mathbb P^1, f^* T \mathcal A) = h^0(\mathbb P^1, f^* T\F)\neq 0$.

If $\dim \mathcal A =2$ then all the leaves of $\F$ are algebraic and Darboux-Jouanolou Theorem guarantees that $\F$ admits a rational first integral. The foliation $\F$ fits into the description given by item (4).

 If instead $\dim \mathcal A =1$ then $T\mathcal A$ is a line bundle and $h^0(\mathbb P^1, f^*T\mathcal A)\neq 0$ implies that $T\mathcal A = \mathcal O_{\mathbb P^3}$ or $T\mathcal A = \mathcal O_{\mathbb P^3}(1)$. In the first case, $\mathcal A$ has degree one and $\F$ is described by item (2). In the second case, $\mathcal A$ has degree zero and its leaves are lines through a unique point of $\mathbb P^3$. The foliation $\F$ is described by item (3). \qed

\bibliography{references}{}

\begin{thebibliography}{10}

\bibitem{MR2223484}
Philippe Bonnet.
\newblock Minimal invariant varieties and first integrals for algebraic
  foliations.
\newblock {\em Bull. Braz. Math. Soc. (N.S.)}, 37(1):1--17, 2006.

\bibitem{MR1955577}
Guy Casale.
\newblock Suites de {G}odbillon-{V}ey et int\'egrales premi\`eres.
\newblock {\em C. R. Math. Acad. Sci. Paris}, 335(12):1003--1006, 2002.

\bibitem{MR3088436}
Dominique Cerveau and Alcide Lins~Neto.
\newblock A structural theorem for codimension-one foliations on {$\mathbb
  P^n$}, {$n\geq 3$}, with an application to degree-three foliations.
\newblock {\em Ann. Sc. Norm. Super. Pisa Cl. Sci. (5)}, 12(1):1--41, 2013.

\bibitem{MR1394970}
Dominique Cerveau and Alcides Lins~Neto.
\newblock Irreducible components of the space of holomorphic foliations of
  degree two in {$\bold C{\rm P}(n)$}, {$n\geq 3$}.
\newblock {\em Ann. of Math. (2)}, 143(3):577--612, 1996.

\bibitem{MR2324555}
Dominique Cerveau, Alcides Lins-Neto, Frank Loray, Jorge~Vit{\'o}rio Pereira,
  and Fr{\'e}d{\'e}ric Touzet.
\newblock Complex codimension one singular foliations and {G}odbillon-{V}ey
  sequences.
\newblock {\em Mosc. Math. J.}, 7(1):21--54, 166, 2007.

\bibitem{MR3294560}
Ga{\"e}l Cousin and Jorge~Vit{\'o}rio Pereira.
\newblock Transversely affine foliations on projective manifolds.
\newblock {\em Math. Res. Lett.}, 21(5):985--1014, 2014.

\bibitem{MR521918}
David~I. Lieberman.
\newblock Compactness of the {C}how scheme: applications to automorphisms and
  deformations of {K}\"ahler manifolds.
\newblock In {\em Fonctions de plusieurs variables complexes, {III} ({S}\'em.
  {F}ran\c{c}ois {N}orguet, 1975--1977)}, volume 670 of {\em Lecture Notes in
  Math.}, pages 140--186. Springer, Berlin, 1978.

\bibitem{2011arXiv1107.1538L}
Frank {Loray}, Jorge~Vit{\'o}rio { Pereira}, and Fr\'ed\'eric {Touzet}.
\newblock {Singular foliations with trivial canonical class}.
\newblock {\em ArXiv e-prints}, July 2011.

\bibitem{MR3066408}
Frank Loray, Jorge~Vit\'orio Pereira, and Fr\'ed\'eric Touzet.
\newblock Foliations with trivial canonical bundle on {F}ano 3-folds.
\newblock {\em Math. Nachr.}, 286(8-9):921--940, 2013.

\bibitem{MR3522824}
Frank Loray, Jorge~Vit\'orio Pereira, and Fr\'ed\'eric Touzet.
\newblock Representations of quasi-projective groups, flat connections and
  transversely projective foliations.
\newblock {\em J. \'Ec. polytech. Math.}, 3:263--308, 2016.

\end{thebibliography}
\bibliographystyle{plain}
\end{document}